\documentclass[12pt,twoside]{amsart}
\usepackage{amssymb,amsmath,amsfonts,amsthm}
\usepackage{mathrsfs}
\usepackage[X2,T1]{fontenc}
\usepackage[applemac]{inputenc}
\usepackage{cite}
\usepackage{latexsym}
\usepackage[dvips]{graphicx}
\usepackage{comment}
\usepackage{ulem}
\setlength{\oddsidemargin}{-5mm}
\setlength{\evensidemargin}{-5mm}
\setlength{\textwidth}{170mm}
\setlength{\textheight}{240mm}
\setlength{\topmargin}{-2mm}
\def\px{\langle x \rangle}
\def\pd{\langle D \rangle}
\def\Im{\mathop{\rm Im}\nolimits}
\def\Re{\mathop{\rm Re}\nolimits}

\def\Im{\mathop{\rm Im}\nolimits}
\def\Re{\mathop{\rm Re}\nolimits}

\def\R{\mathbb R}
\def\C{\mathbb C}
\def\N{\mathbb N}

\def\ds{\displaystyle}

\newcommand\dslash{d\llap {\raisebox{.9ex}{$\scriptstyle-\!$}}}
\usepackage{colortbl}

\newcommand{\beqsn}{\arraycolsep1.5pt\begin{eqnarray*}}
	\newcommand{\eeqsn}{\end{eqnarray*}\arraycolsep5pt}
\newcommand{\beqs}{\arraycolsep1.5pt\begin{eqnarray}}
	\newcommand{\eeqs}{\end{eqnarray}\arraycolsep5pt}

\newtheorem{Th}{Theorem}[section]
\newtheorem{Rem}[Th]{Remark}

\newtheorem{Lemma}[Th]{Lemma}

\newtheorem{Def}[Th]{Definition}
\newtheorem{Prop}[Th]{Proposition}

\def\px{\langle x \rangle}

\def\pd{\langle D \rangle}
\catcode`\@=11

\renewcommand{\section}%
{\setcounter{equation}{0}\@startsection {section}{1}{\z@}{-3.5ex plus -1ex
		minus -.2ex}{2.3ex plus .2ex}{\Large\bf}}

\title{Smoothing effect for higher order dispersive equations and applications to nonlinear initial value problems}

\author[1]{Alexandre Arias Junior $^1$}
\author[2]{Alessia Ascanelli $^2$}
\author[3]{Marco Cappiello $^3$}

\begin{document}
	
	
	\begin{abstract} 
In this paper we deal with the initial value problem related to a family of dispersive inhomogeneous evolution equations $Pu=f$ with variable coefficients belonging to the class of $p$-evolution equations, $p\geq 2$. We study the smoothing effect produced by some spatial decay assumptions on the imaginary part of the subleading  coefficient of the linear operator $P$. Then we apply this result to nonlinear problems with derivative nonlinearities obtaining existence and uniqueness of the solution in a suitable Sobolev class. The nonlinear equations considered include various equations describing wave propagation in inhomogeneous media such as KdV-type and Kawahara-type equations with variable coefficients.\end{abstract}

	\maketitle
\noindent \textit{2020 Mathematics Subject Classification}: Primary 35B65; Secondary 35G25, 35S05  \\
\noindent	\textit{Keywords and phrases}: Smoothing estimates, $p$-evolution equations, nonlinear initial value problems
	
	\markboth{\sc Smoothing effect for dispersive equations of higher order}{\sc A. Arias Junior, A.~Ascanelli, M. Cappiello}
	\section{Introduction and main results}
The aim of this paper is to study the smoothing properties of a family of dispersive inhomo\-ge\-ne\-ous evolution equations of the form  $Pu=f$, where $P$ stands for the differential operator
\begin{equation}\label{op}
	P(t,x,D_t,D_x)=D_t+a_p(t,x)D_x^p+\sum_{j=0}^{p-1} a_j(t,x)D_x^j, \qquad (t,x)\in [0,T]\times \R,
\end{equation} 
$D=\frac 1i \partial$. We assume $a_j\in C([0,T];{\mathcal B}^\infty)$ for $0\leq j\leq p$, where ${\mathcal B}^\infty={\mathcal B}^\infty(\R)$ is the space of complex-valued functions which are bounded on $\R$ together with all their derivatives and that $a_p$ is real valued. Operators of the form \eqref{op} are also known in the literature as $p$-evolution operators  and $a_p$ real-valued guarantees that the assumptions of Lax-Mizohata theorem are satisfied, cf. \cite[Theorem 1.2]{Mizohata1961}.
\\ \indent
The well-posedness in $L^2(\R), H^s(\R)$ and $H^\infty(\R):= \cap_{s \in \R}H^s(\R)$ of the related linear Cauchy problem
\beqs
\label{genCP}
\begin{cases}
	P(t,x,D_t,D_x)u(t,x)=f(t,x) & (t,x)\in[0,T]\times\R\cr
	u(0,x)=g(x) & x\in\R
\end{cases}
\eeqs
has been investigated  by several authors, cf. \cite{CR, Doi2, I1, I2, KB} in the case $p=2$ and \cite{ABZsuff, ABZnec, ABmoser} for a generic $p$. In general, when some of the coefficients $a_j, j=1,\ldots, p-1$ are complex-valued, some control on the imaginary part of these coefficients is necessary to obtain well-posedness. In \cite{ABZnec, I1} for instance it was proved that a necessary condition for well-posedness of \eqref{genCP} in $H^\infty(\R)$ is the existence of
constants $M,N>0$ such that:
\begin{equation}
	\label{CN2}
	\sup_{x\in\R}\min_{0\leq\tau\leq t\leq T}\int_{-\varrho}^\varrho 
	\Im \, a_{p-1}(t,x+p a_p(\tau)\theta)d\theta\leq M\log(1+\varrho)+N,\qquad 
	\forall \varrho>0.
\end{equation}
In papers treating sufficient conditions for well-posedness, it is usual to assume pointwise decay conditions on the coefficients for $|x|$ large. In \cite{ABZsuff, ABmoser}, the well-posedness in $H^\infty(\R)$ for $p$-evolution operators of the form \eqref{op} has been obtained using very precise decay assumptions on the  coefficients $a_j$. If the decay is strong enough, it is possible to obtain well-posedness in Sobolev spaces without loss of derivatives. \\
 \indent 
In addition, it has been known for many years that under suitable assumptions on the coefficients, dispersive equations enjoy smoothing properties. These have been completely established for homogeneous equations ($f=0$) with constant coefficients, see for instance \cite{ConstantinSaut1, ConstantinSaut2, Kato, KPV1, Sugimoto}. Roughly speaking, it is well known that the solution gains $(p-1)/2$ derivatives with respect to the initial datum $g$. 
In the papers \cite{KPV2, KPV3, KPV4} Kenig, Ponce and Vega studied the case of inhomogenous equations and the smoothing effect also with respect to the forcing term $f$ establishing that it consists of $p-1$ derivatives. 
\\
For equations with variable $x$-dependent coefficients, the largest part of existing results concern Schr\"odinger-type equations, see \cite{Doi1, Doi2, FS, KPV5, KPRV}. Whenever the leading coefficient $a_p$ depends on $x$, a non-trapping condition is required in order to obtain the smoothing effect. Roughly speaking, this condition corresponds to the fact the bicharacteristic curves of the operator $P$ are globally defined and escape from any compact subset of $\R^{2n}$ when the parameter becomes large enough, cf. \cite{CKS}. Recently, Federico and Tramontana \cite{FT} obtained smoothing estimates for linear third order equations ($p=3$) replacing the  non-trapping condition with a "smallness assumption" on the coefficients and applied them to obtain local existence results for KdV-type equations with variable coefficients. In fact, an interesting consequence of the smoothing effect which has been analyzed in many of the above mentioned references is the possibility of proving local (in time) existence and uniqueness results for nonlinear Cauchy problems with linear part given by an operator satisfying smoothing estimates.
Concerning higher order equations with $x$-dependent coefficients, the most relevant reference is the paper \cite{chihara} by Chihara where the author obtained smoothing estimates for a $p$-evolution operator of arbitrary order under a decay condition on the subprincipal symbol and the non-trapping condition.\\

Starting from this state of the art, in this paper we want to consider operators of the form \eqref{op} with arbitrary degree of evolution $p$ and obtain more refined smoothing estimates compared to \cite{chihara, FT} for the inhomogeneous Cauchy problem \eqref{genCP} imposing weaker decay conditions on the lower order terms. Then, we shall apply this to obtain local existence and uniqueness of solutions for a nonlinear Cauchy problem with right hand side depending on $u$ and its derivatives and linear part given by \eqref{op}.  \\
  \indent
In order to establish our smoothing estimates we need to introduce the weighted Kato-Sobolev
spaces $H^{s_1,s_2}(\R^n),$ $s_1,s_2 \in \R$ defined as 
\begin{equation}\label{wSobolev}
	H^{s_1,s_2}(\R^n)= \{u \in \mathscr{S}'(\R^n): \langle x \rangle^{s_2}\langle D \rangle^{s_1}u \in L^2(\R^n)\},
\end{equation}
where $\langle x \rangle^{s_2}\langle D \rangle^{s_1}$ denotes the operator with symbol $\langle x \rangle^{s_2}\langle \xi \rangle^{s_1}$, $\langle a\rangle$ being the usual Japanese bracket  $(1+|a|^2)^{1/2}$ for $a\in\R^n$. The  main properties of $H^{s_1,s_2}$ spaces are reported in Section \ref{preliminaries}.\\

The main result of this paper reads as follows.

\begin{Th}\label{smoothing_theorem}
	Consider the Cauchy problem \eqref{genCP}, \eqref{op} with $a_j\in C([0,T];{\mathcal B}^\infty),$ $0\leq j\leq p$ and $a_p(t,x)\in\R$. Assume that for every $(t,x)\in[0,T]\times\R$ we have:
\beqs \label{1}
a_p(t,x)&\geq& C_{a_p}>0
\\
\label{2}
|D_x^\beta a_p(t,x)|&\leq&
\frac{C}{\langle x\rangle^{\frac{p-[\beta/2]}{p-1}}},\quad 0\leq\left[\frac\beta 2\right]\leq p-1,\ \beta\ \mbox{odd}\\
	\label{4}
|\Im a_{p-1}(t,x)|&\leq&\frac{C_{p-1}}{\langle 
		x\rangle^{\sigma}},\quad \sigma>1\\
|\Im a_j(t,x)|&\leq&\frac{C_j}{\langle 
		x\rangle^{\frac{j}{p-1}}},\quad 1\leq j\leq p-2\\
	\label{3}
	|\Re D_x^\beta a_j(t,x)|&\leq& C_j \quad 0\leq \beta\leq j-1,\ 3\leq j\leq p-1\\
\label{33}
|\Im D_x^\beta a_j(t,x)|&\leq&\frac{C_j}{\langle 
		x\rangle^{\frac{j-[\beta/2]}{p-1}}},\quad 0<\left[\frac\beta2\right]\leq j-1,\ 3\leq j\leq p-1\\
	\label{a1}
	|\Im D_x a_2|&\leq& 
	\frac{C}{\langle x\rangle^{\frac{1}{p-1}}}
	\eeqs
	for some $C>0$, where $[\beta/2]$ denotes the integer part of $\beta/2$. Then:
	\begin{itemize}
		\item[(i)] Given $m \geq 0$, if $g \in H^m, f \in C([0,T];H^{m})$, then there exists a unique solution to \eqref{genCP}  $u \in C([0,T];H^m)$ satisfying
		{\small
			\begin{align}\label{energy!}
				\|u(t)\|^{2}_{H^{m}} +\ds \int_{0}^{t}\Big(
				\|u(\tau)\|^2_{ H^{ m+\frac{p-1}{2}, -\frac{\sigma}{2}} }+ &\sum_{j = 2}^{p-1}  \|u(\tau)\|^{2}_{ H^{m+ \frac{p-j}{2}, -\frac{p-j}{2(p-1)}} }
				\Big) d\tau \\ \nonumber
				&\leq 
				C_{m,T} \left\{\|g\|^{2}_{H^{m}} + \int_0^t \| f(\tau)\|^{2}_{H^{m}} d\tau   \right\}.
			\end{align}
		}
		
		\item[(ii)]Given $m \geq 0$, if $g \in H^m, f \in L^{2}([0,T];H^{m-\frac{p-1}{2}, \frac{\sigma}{2}} )$, then there exists a unique solution to \eqref{genCP} $u \in C([0,T];H^m)$ satisfying
		{\small
			\begin{align}\label{energy!!}
				\|u(t)\|^{2}_{H^{m}} + \ds\int_{0}^{t}\Big( \|u(\tau)\|^{2}_{ H^{ m+\frac{p-1}{2}, -\frac{\sigma}{2}} } &	+ \sum_{j = 2}^{p-1}  \|u(\tau)\|^{2}_{ H^{  m+ \frac{p-j}{2}, -\frac{p-j}{2(p-1)}} }\Big) d\tau \\ \nonumber
				&\leq 
				C_{m,T} \left\{\|g\|^{2}_{H^{m}} + \int_0^t \|f(\tau)\|^{2}_{H^{m-\frac{p-1}{2},\frac{\sigma}{2}}} d\tau   \right\}.
			\end{align}
		}	
	\end{itemize}
	In both cases,  the positive constant $C_{m,T}$ is of the form $C_me^{C_m'T}$, with $C_m,C'_m>0$, and remains bounded as $T\to 0^+$.
\end{Th}

\begin{Rem}
	We see from the above theorem that the solution to \eqref{genCP} gains $p-1$ derivatives compared to $f$ and $\frac{p-1}{2}$ compared to the datum $g$ (c.f. \eqref{energy!!} and \eqref{energy!}) with respect to some weighted Sobolev norms, where the spatial weight depends on the decay assumption on the subleading term $a_{p-1}(t,x)$ (see \eqref{4}). This is consistent with previous results obtained for Schr\"odinger-type or KdV-type equations, cf. \cite{chihara, KPRV, FS, FT}.
\end{Rem}

\begin{Rem}
The proof of Theorem \ref{smoothing_theorem} is based on a suitable change of variable. This introduces new terms in the operator which allow to trigger an iterative application of sharp 
G{\aa}rding inequality with 
remainders, cf. Theorem \ref{thmg} below. In the application of this strategy, the precise assumptions \eqref{4}-\eqref{a1} on the lower order terms appearing in the theorem play a crucial role. The outcome is the transformation of the operator into a sum of suitable smoothing terms, of positive terms and of purely imaginary ones. In the application of the energy method the positive terms and the purely imaginary terms can be neglected and this leads the desired estimates \eqref{energy!} and \eqref{energy!!}.
\end{Rem}

As a classical application of smoothing estimates, similarly to \cite{FS,FT, KPRV} for the cases $p=2,3,$ in Section \ref{nonlinearapp} we prove a result of local existence and uniqueness for the nonlinear Cauchy problem
\begin{equation}\label{nonlinearCP_intro}
	\begin{cases} Pu = Q(u,\bar u ,D_x u, \ldots, D_x^{p-1}u)\\ u(0,x)=g(x)\end{cases}, \qquad (t,x) \in [0,T]\times \R,
\end{equation}
where $Q(u, \bar u, D_x u, \ldots, D_x^{p-1}u)$ is a polynomial in $u, \bar u , D_x u, \ldots, D_x^{p-1}u$ without constant or linear terms and $g \in \mathscr{S}(\R)$. This result is the content of Theorem \ref{mainNL}, which states, roughly speaking, that under the assumptions of Theorem \ref{smoothing_theorem} on $P$ the Cauchy problem \eqref{nonlinearCP_intro} admits a unique local in time solution $u$ such that 

\beqsn \|u(t,\cdot)\|^2_{L^\infty_t H_x^m} &+& \int_0^{T^\ast}\!\!\! \left( \|u(\tau)\|^2_{H^{m+\frac{p-1}2, -\frac\sigma2}}+\sum_{k=2}^{p-1}
	\|u(\tau)\|^2_{H^{m+\frac{p-k}2, -\frac{p-k}{2(p-1)}}}\right)d\tau 
	\\\nonumber
	&+& \| u \|_{L^\infty_t H_x^{\tilde m, 2N}}^2+\| \partial_tu \|_{L^\infty_t H_x^{\tilde m, 2N}}^2 <\infty\eeqsn
for suitable large enough $m$ and $\tilde m (<m)$ and a small enough $T^\ast<T$, where $\sigma$ is the decay rate appearing in \eqref{4} and $2N$ is the smallest even number larger or equal to $\sigma$.
\\

We conclude this introduction by comparing our results with other similar results for variable coefficient operators, in particular with \cite{chihara} and \cite{FT}, highliting improvements and restrictions with respect to these papers. \\
With respect to the smoothing estimates obtained in \cite{chihara}, we observe that in Theorem \ref{smoothing_theorem} we 
do not assume the same decay condition on the whole symbol of the operator, but we give less and less restictive decay conditions as the order of the corresponding terms decreases; moreover, under these weaker assumptions, we 
obtain also intermediate estimates of the gain of derivatives and polynomial decay.  More to the point, we impose weaker decay conditions on the lower order coefficients $a_j, j=0,\ldots,p-1,$ with respect to \cite{chihara} emphasizing their specific roles in the estimate like in \cite{ABZsuff}. Moreover, our paper contains an application of smoothing estimates to nonlinear Cauchy problems that, as far as we know, has never apperared in literature but in the cases $p=2,3$. Notice that in our paper we do not need to assume a non-trapping condition as in \cite{chihara} but under the conditions \eqref{1} and \eqref{2} of Theorem \ref{smoothing_theorem} such a condition is satisfied, cf. Remark \ref{nontrappingremark} below.
\\ 
With respect to \cite{FT} the main improvement consists in the fact that we are able to treat operators of arbitrary order $p$, thanks to a strategy based on an iterative application of sharp G{\aa}rding inequality whereas the authors of \cite{FT} use Fefferman-Phong inequality to treat second order terms and sharp G{\aa}rding inequality to deal with the first order terms limiting their analysis to third order operators. Therefore, we can apply our results to nonlinear dispersive equations of arbitrary order with variable coefficients governing wave propagation phenomena in inhomogeneous media, cf. \cite{Molinet, Gomez, Israwi}. Another remarkable fact is that in \cite{FT} the authors assume that the seminorms of the subprincipal symbol of the operator are sufficiently small (cf. \cite{FT} condition (3.2)); in our paper, this \textit{smallness} condition is not necessary. Finally, in Theorem 4.2, we admit for the imaginary part of the subleading coefficient $\Im a_{p-1}$ an arbitrary decay rate $\sigma>1$, while in \cite{FT} and in the previous literature the decay was forced to be of the form $\sigma =2N, N \in \N.$ This improvement is relevant because, as we can see from the estimates \eqref{energy!} and \eqref{energy!!}, the loss in the second Sobolev index appearing in $u$ is larger if $\sigma$ is large. Analogously, in \eqref{energy!!} the assumption $f \in L^2([0,T], H^{(p-1)/2, \sigma/2)} (\R)$ is weaker if $\sigma $ is close to $1$. So in our analysis it is convenient to consider $\sigma$ as close as possible to $1$: there is no gain from a stronger decay of $\textrm{Im}\, a_{p-1}$. 
\\
On the other hand, with respect to both \cite{chihara} and \cite{FT}, our result is limited to one space dimension. The dimensional limit is due to the fact that at the moment a more general version of the change of variable used in the proof of Theorem \ref{smoothing_theorem} is out of reach. We intend to investigate this challenging problem in the future and to extend the present results in arbitrary dimension. Nevertheless, the large number of dispersive equations in one space dimension appearing in the literature sufficiently motivates the results presented here.

	\section{Preliminaries}\label{preliminaries}
	\textbf{Notation:} In the sequel we shall denote by $\N$ the set of positive integers and by $\N_0$ the set $\N \cup \{0\}.$ In the proof of Theorem \ref{smoothing_theorem}, for technical reasons, we shall use the modified Japanese bracket $\langle x \rangle_h := (h^2+|x|^2)^{1/2}, x \in \R^n$, where $h \geq 1$ is a real parameter. \\
	 
In the proofs of our results we shall freely use pseudodifferential operators with symbols in the standard H\"ormander classes $\textbf{\textrm{S}}^m(\R^{2n})$ or in the classes $\textbf{\textrm{SG}}(\R^{2n})$, cf. \cite{Cordes, Schrohe, Parenti}. In this section we briefly recall the main definition and the classical properties of these classes. Moreover, we recall the main properties of the weighted Sobolev spaces defined in the Introduction. 
Although we will use these tools in the case of one space dimension we state them in arbitrary dimension in view of future applications. \\

\begin{Def}
	Given $m \in \R$, we shall denote by $\textbf{\textrm{S}}^m(\R^{2n})$ the space of all functions $p \in C^\infty(\R^{2n})$ such that for all $\alpha, \beta \in \N_0^n$ there exists $C_{\alpha \beta}>0$ for which  $$|\partial_\xi^\alpha \partial_x^\beta p(x,\xi)| \leq C_{\alpha \beta}\langle \xi \rangle^{m-|\alpha|} \qquad \forall (x,\xi) \in \R^n \times \R^n.$$
\end{Def}
We recall that $\textbf{\textrm{S}}^m(\R^{2n})$ is a Fr\'echet space endowed with the seminorms given by $$\sup_{(x,\xi)\in \R^n \times \R^n}\langle \xi\rangle^{-m+|\alpha|}|\partial_\xi^\alpha \partial_x^\beta p(x,\xi)|, \qquad \alpha, \beta \in \N_0^n.$$

\begin{Def}
	Given $m_1,m_2 \in \R$, we shall denote by $\textbf{\textrm{SG}}^{m_1,m_2}(\R^{2n})$ the space of all functions $p \in C^\infty(\R^{2n})$ such that for all $\alpha, \beta \in \N_0^n$ there exists $C_{\alpha \beta}>0$ for which  $$|\partial_\xi^\alpha \partial_x^\beta p(x,\xi)| \leq C_{\alpha \beta}\langle \xi \rangle^{m_1-|\alpha|} \langle x \rangle^{m_2-|\beta|} \qquad \forall (x,\xi) \in \R^n \times \R^n.$$
\end{Def}
As before we can endow $\textbf{\textrm{SG}}^{m_1,m_2}(\R^{2n})$ with a Fr\'echet space topology using the family of seminorms $$\sup_{(x,\xi)\in \R^n \times \R^n}\langle \xi\rangle^{-m_1+|\alpha|}\langle x \rangle^{-m_2+|\beta|}|\partial_\xi^\alpha \partial_x^\beta p(x,\xi)|, \qquad \alpha, \beta \in \N_0^n.$$

Given $p \in \textbf{\textrm{S}}^m(\R^{2n})$ (or $p \in \textbf{\textrm{SG}}^{m_1,m_2}(\R^{2n})$), we will denote by $p(x,D)$ (or also by $\textrm{op}(p(x,\xi)$)  the associated pseudodifferential operator 
$$p(x,D)f(x)= \int_{\R^n} e^{ix\cdot \xi} p(x,\xi) \hat{f}(\xi)\, \dslash \xi, \qquad f \in \mathscr{S}(\R^n),$$ where $\dslash \xi=(2\pi)^{-n}d\xi,$ according with the classical Kohn-Nirenberg quantization. \\
We recall that $p(x,D)$ is a linear continuous maps from $\mathscr{S}(\R^n)$ to $\mathscr{S}(\R^n)$ and extends to a linear and continuous map from  $\mathscr{S}'(\R^n)$ to $\mathscr{S}'(\R^n).$ Moreover, if $p \in \textbf{\textrm{S}}^m(\R^{2n}),$ then $p(x,D)$ extends to a bounded operator from $H^s(\R^n)$ to $H^{s-m}(\R^n).$ 
For operators with symbols in $\textbf{\textrm{SG}}^{m_1,m_2}(\R^{2n})$ more precise results can be obtained in weighted Sobolev spaces. Namely, if $p \in \textbf{\textrm{SG}}^{m_1,m_2}(\R^{2n})$, then for every $s_1,s_2 \in \R$,
the operator $p(x,D)$ maps continuously $H^{s_1,s_2}(\R^n)$ into $H^{s_1-m_1,s_2-m_2}(\R^n).$ Composition properties of pseudodifferential operators with symbols in $\textrm{\textbf{SG}}$ classes are collected in the following proposition.

\begin{Prop}
	Let $p \in \textbf{\textrm{SG}}^{m_1,m_2}(\R^{2n}), q \in \textbf{\textrm{SG}}^{m'_1,m'_2}(\R^{2n})$. Then:
	\begin{itemize}\item[(i)] There exists $s \in \textbf{\textrm{SG}}^{m_1+m'_1,m_2+m'_2}(\R^{2n})$ and $r \in \mathscr{S}(\R^{2n})$ such that $$p(x,D)q(x,D)= s(x,D)+r(x,D)$$
		and for every $N \in \N,$ we have
		$$s(x,\xi) - \sum_{|\alpha| <N} \alpha!^{-1}\partial_\xi^\alpha p(x,\xi) D_x^\alpha q(x,\xi) \in \textbf{\textrm{SG}}^{m_1+m'_1-N,m_2+m'_2-N}(\R^{2n}).$$
		\item[(ii)] There exists $\tilde s \in \textbf{\textrm{SG}}^{m_1+m'_1-1,m_2+m'_2-1}(\R^{2n})$ and $\tilde r \in \mathscr{S}(\R^{2n})$ such that $$[p(x,D), q(x,D)]= p(x,D)q(x,D)-q(x,D)p(x,D)= \tilde s(x,D)+ \tilde r(x,D)$$
		and for every $N \in \N,$ we have
		$$\tilde s(x,\xi) -\hskip-0.2cm \sum_{1\leq |\alpha| <N} \ds\frac1{\alpha!}\left\{\partial_\xi^\alpha p(x,\xi) D_x^\alpha q(x,\xi) -\partial_\xi^\alpha q(x,\xi) D_x^\alpha p(x,\xi)\right\}\in \textbf{\textrm{SG}}^{m_1+m'_1-N,m_2+m'_2-N}(\R^{2n}).$$
	\end{itemize}
\end{Prop}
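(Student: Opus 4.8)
The statement is the classical symbolic calculus for the $\textbf{\textrm{SG}}$ classes, and I would prove it along standard lines (see \cite{Cordes, Schrohe, Parenti}), the only delicate point being the simultaneous control of the decay in $\xi$ and of the growth in $x$. First I would record the integral representation of the composed operator: the Fourier inversion formula gives $p(x,D)q(x,D)=\textrm{op}(\sigma)$ where, understood as an oscillatory integral,
\[
\sigma(x,\xi)=\iint e^{-iy\cdot\eta}\,p(x,\xi+\eta)\,q(x+y,\xi)\,dy\,\dslash\eta .
\]
The first task is to check that this integral is well defined and that $\sigma\in\textbf{\textrm{SG}}^{m_1+m'_1,m_2+m'_2}(\R^{2n})$: one regularizes by cut-offs $\chi(\varepsilon y)\chi(\varepsilon\eta)$, integrates by parts in $y$ to gain decay $\langle\eta\rangle^{-M}$ and in $\eta$ to gain decay $\langle y\rangle^{-M}$, and then lets $\varepsilon\to0$. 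In the ensuing bounds the factor $\langle x+y\rangle^{m'_2}$ coming from $q$ at the shifted point is absorbed via Peetre's inequality $\langle x+y\rangle^{m'_2}\lesssim\langle x\rangle^{m'_2}\langle y\rangle^{|m'_2|}$, the spurious powers of $\langle y\rangle$ being beaten by the decay produced by the integrations by parts in $\eta$; differentiation in $x$ (resp.\ $\xi$) lowers the $\langle x\rangle$-order (resp.\ the $\langle\xi\rangle$-order) by one, as it should.

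Next I would prove the asymptotic expansion. Fix $N\in\N$ and Taylor-expand $p(x,\xi+\eta)$ in the covariable at $\eta=0$:
\[
p(x,\xi+\eta)=\sum_{|\alpha|<N}\frac{\eta^\alpha}{\alpha!}\,\partial_\xi^\alpha p(x,\xi)+N\!\!\sum_{|\alpha|=N}\frac{\eta^\alpha}{\alpha!}\int_0^1(1-\theta)^{N-1}\,\partial_\xi^\alpha p(x,\xi+\theta\eta)\,d\theta .
\]
Inserting this into the integral representation, using $\eta^\alpha e^{-iy\cdot\eta}=(-D_y)^\alpha e^{-iy\cdot\eta}$, integrating by parts in $y$ and applying Fourier inversion, the finite sum reproduces exactly $\sum_{|\alpha|<N}\alpha!^{-1}\partial_\xi^\alpha p(x,\xi)D_x^\alpha q(x,\xi)$, while the Taylor remainder yields an oscillatory integral $r_N(x,\xi)$. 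The core of the argument is to show $r_N\in\textbf{\textrm{SG}}^{m_1+m'_1-N,m_2+m'_2-N}(\R^{2n})$: after differentiating in $x$ and $\xi$ and integrating by parts in $\eta$ so as to produce a factor $\langle y\rangle^{-M}$ with $M$ large, one estimates $|\partial_\xi^{\alpha}p(x,\xi+\theta\eta)|\lesssim\langle\xi+\theta\eta\rangle^{m_1-N-|\alpha|}$ and splits the $\eta$-integral into $|\eta|\le\tfrac12\langle\xi\rangle$ (where $\langle\xi+\theta\eta\rangle\sim\langle\xi\rangle$) and $|\eta|\ge\tfrac12\langle\xi\rangle$ (where the polynomial loss in $\eta$ is dominated by $\langle y\rangle^{-M}$ after further integrations by parts), Peetre's inequality again taking care of the $x$-variable. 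This delivers precisely the claimed gain of $N$ orders in both indices.

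To obtain the single symbol $s$ and the \emph{Schwartz} remainder $r$ of part (i), I would asymptotically sum the formal series $\sum_\alpha\alpha!^{-1}\partial_\xi^\alpha p\,D_x^\alpha q$, whose term indexed by $\alpha$ lies in $\textbf{\textrm{SG}}^{m_1+m'_1-|\alpha|,m_2+m'_2-|\alpha|}$, into a symbol $s\in\textbf{\textrm{SG}}^{m_1+m'_1,m_2+m'_2}(\R^{2n})$ (a Borel-type construction, standard for these classes); the expansion just proved then gives $\sigma-s\in\bigcap_{N}\textbf{\textrm{SG}}^{-N,-N}(\R^{2n})=\mathscr{S}(\R^{2n})$, so $r:=\sigma-s$ works. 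Part (ii) is an immediate corollary: writing $[p(x,D),q(x,D)]=\textrm{op}(p\#q)-\textrm{op}(q\#p)$ with $p\#q,q\#p$ the composed symbols, the order-$0$ terms $p(x,\xi)q(x,\xi)$ cancel, the terms with $1\le|\alpha|<N$ combine into the stated antisymmetric sum, the surviving leading contribution (from $|\alpha|=1$) has order $(m_1+m'_1-1,m_2+m'_2-1)$, the $|\alpha|=N$ part lies in $\textbf{\textrm{SG}}^{m_1+m'_1-N,m_2+m'_2-N}$, and the Schwartz remainders add up; summing again over $N$ produces $\tilde s$ and $\tilde r$. I expect the main obstacle to be the remainder estimate of the second paragraph: one must track at once the $N$ gained powers of $\langle\xi\rangle$ and of $\langle x\rangle$, which forces a careful accounting of the $\langle y\rangle$-powers coming from Peetre's inequality against the decay gained by integrating by parts in $\eta$ — exactly the feature that distinguishes the $\textbf{\textrm{SG}}$ calculus from the ordinary H\"ormander calculus.
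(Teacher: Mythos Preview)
Your proposal is correct and follows the standard route of the references the paper itself cites (\cite{Cordes, Schrohe, Parenti}): oscillatory-integral representation of the composed symbol, Taylor expansion in the covariable, integrations by parts combined with Peetre's inequality to control the bi-order, and a Borel summation to extract the Schwartz remainder. The paper does not supply a proof of its own for this proposition --- it is recalled as a preliminary fact --- so there is nothing further to compare.
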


We now recall some properties of weighted Sobolev spaces $H^{s_1,s_2}(\R^n)$ defined by \eqref{wSobolev}. For proofs and details we refer to \cite[Section 3.3]{Cordes} and \cite[Proposition 2.2]{AscanelliCappielloLL2006}. 

\begin{Prop}
	\label{wSobolevproperties}
	Consider for every $s_1,s_2\in \R,$ the space  $H^{s_1,s_2}(\R^n)$ defined by \eqref{wSobolev}. The following properties hold:
	\begin{enumerate}
		\item $H^{s_1,s_2}(\R^n)$  is a Hilbert space endowed with the inner product
		$$(u,v)_{H^{s_1,s_2}} = \left( \px^{s_2} \pd^{s_1}u, \px^{s_2} \pd^{s_1}v \right)_{L^2}.$$
		\item $H^{s_1,s_2}(\R^n)$ is the space of all $u \in \mathscr{S}'(\R^n)$ such that
		$\pd^{s_1}(\px^{s_2}u) \in L^2(\R^n)$ and the norms
		$\left\| \px^{s_2} \pd^{s_1} u \right\|_{L^2}$ and $\left\| \pd^{s_1}(\px^{s_2}u) \right\|_{L^2}$ are equivalent. \\
		\item If $s_j \leq t_j, j=1,2,$ then $H^{t_1,t_2}(\R^n) \subseteq H^{s_1,s_2}(\R^n).$ Moreover, if $s_j<t_j,j=1,2,$ then
		the embedding $H^{t_1,t_2}(\R^n) \hookrightarrow H^{s_1,s_2}(\R^n) $ is compact. \\
		\item The following identities hold: $$\bigcap_{s_1,s_2 \in \R}H^{s_1,s_2}(\R^n)= \mathscr{S}(\R^n), \qquad \bigcup_{s_1,s_2 \in \R}H^{s_1,s_2}(\R^n) = \mathscr{S}'(\R^n).$$
		\item If $s_1 \in \N_0,$ then $H^{s_1,s_2}(\R^n)$ is the space of all functions $u \in L^2(\R^n)$ such that
		$\px^{s_2} \partial_x^{\alpha}u(x) \in L^2(\R^n)$ for all $\alpha \in \N_0^n$ with $|\alpha|\leq s_1$ and an equivalent norm is given by
		$$\sum_{|\alpha| \leq s_1} \|\px^{s_2} \partial_x^{\alpha}u(x) \|_{L^2}.$$
		\item If $s_1 > \frac{n}{2}+j$ for some $j \in \N_0,$ then we have the compact embedding
		$$H^{s_1,s_2}(\R^n) \hookrightarrow \{u \in C^j(\R^n): \px^{s_2}\partial^{\alpha}_xu(x)
		\in C_{(o)}(\R^n) \quad \forall |\alpha| \leq j \}$$
		where $C_{(o)}(\R^n)$ is the space of all functions $f \in C(\R^n)$ such that
		$f(x) \rightarrow 0$ when $|x| \rightarrow +\infty.$
		\item The Fourier transformation $\mathcal F$ is a linear and continuous bijection from $H^{s_1,s_2}(\R^n)$ to $H^{s_2,s_1}(\R^n)$.
		\item If $s_1 >n/2$ and $s_2 \geq 0$, then the space $H^{s_1,s_2}(\R^n)$ is an algebra with respect to the product, that is there exists a constant $C=C_{s_1,s_2}>0$ such that for every $u,v \in H^{s_1,s_2}(\R^n)$, we have $uv \in H^{s_1,s_2}(\R^n)$ and the following estimate holds:
		$$\| uv \|_{H^{s_1,s_2}} \leq C \|u\|_{H^{s_1,s_2}} \cdot \|v\|_{H^{s_1,s_2}}.$$
	\end{enumerate}
\end{Prop}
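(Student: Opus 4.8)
\textbf{Proof proposal for Proposition \ref{wSobolevproperties}.}

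The plan is to reduce every statement to known properties of the operators $\px^{s}$ and $\pd^{s}$ as elements of the $\textrm{\textbf{SG}}$-calculus, together with the standard Plancherel identity. For (1)–(2), the starting point is that $\px^{s_2}\pd^{s_1}\in \textrm{\textbf{SG}}^{s_1,s_2}(\R^{2n})$ is elliptic with parametrix in $\textrm{\textbf{SG}}^{-s_1,-s_2}(\R^{2n})$; hence $u\mapsto \px^{s_2}\pd^{s_1}u$ is a bijection of $\mathscr S'(\R^n)$ and the pull-back of the $L^2$-inner product is an inner product making $H^{s_1,s_2}$ complete (completeness follows since a Cauchy sequence in $H^{s_1,s_2}$ maps to a Cauchy, hence convergent, sequence in $L^2$, and the inverse operator is continuous on $\mathscr S'$). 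For (2) one writes $\pd^{s_1}(\px^{s_2}u)=\pd^{s_1}\px^{s_2}\pd^{-s_1}\cdot\pd^{s_1}u$ and observes, via the composition Proposition, that $\pd^{s_1}\px^{s_2}\pd^{-s_1}\in\textrm{\textbf{SG}}^{0,s_2}$ with an inverse in $\textrm{\textbf{SG}}^{0,-s_2}$; since $\textrm{\textbf{SG}}^{0,s_2}$-operators map $H^{0,s_2}\to H^{0,0}=L^2$ boundedly, the two norms are equivalent. Item (7) is immediate: $\mathcal F$ conjugates $\px^{s_2}$ into $\pd^{s_2}$ (acting on the Fourier side) and $\pd^{s_1}$ into multiplication by $\px^{s_1}$, so $\mathcal F$ intertwines $\px^{s_2}\pd^{s_1}$ with $\px^{s_1}\pd^{s_2}$ up to $L^2$-isometry.

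For the embedding statements (3) and (4): if $s_j\le t_j$ then $\px^{s_2}\pd^{s_1}(\px^{t_2}\pd^{t_1})^{-1}\in\textrm{\textbf{SG}}^{s_1-t_1,s_2-t_2}$ with nonpositive orders, hence $L^2$-bounded, giving the continuous inclusion; strict inequalities give symbols in $\textrm{\textbf{SG}}^{-\epsilon,-\epsilon}$, and compactness follows from the fact that operators with symbols of strictly negative order in both variables are compact on $L^2$ (a Rellich-type argument: approximate by finite-rank after cutting off in $x$ and $\xi$, using that $\langle\xi\rangle^{-\epsilon}\langle x\rangle^{-\epsilon}\to 0$). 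The identities in (4) then follow by taking intersections/unions over $s_1,s_2$: $u\in\bigcap H^{s_1,s_2}$ means $\px^{s_2}\pd^{s_1}u\in L^2$ for all $s_1,s_2$, which by (5) below and the Sobolev embedding lemma forces all $\px^{N}\partial^\alpha u$ to be bounded, i.e.\ $u\in\mathscr S$; the union statement is dual, using that every $u\in\mathscr S'$ has finite order and polynomial growth.

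For (5) and (6) I would use the hypothesis $s_1\in\N_0$ (resp.\ $s_1>n/2+j$) to replace $\pd^{s_1}$ by the differential operator $(1-\Delta)^{s_1/2}$ or, more simply, by the family $\{\partial_x^\alpha\}_{|\alpha|\le s_1}$: the equivalence $\|\px^{s_2}\pd^{s_1}u\|_{L^2}\sim\sum_{|\alpha|\le s_1}\|\px^{s_2}\partial_x^\alpha u\|_{L^2}$ comes from commuting $\px^{s_2}$ through $\partial_x^\alpha$ (the commutators lie in lower-order $\textrm{\textbf{SG}}$-classes) and from the two-sided estimate $\langle\xi\rangle^{2s_1}\sim\sum_{|\alpha|\le s_1}|\xi^\alpha|^2$ for integer $s_1$. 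For (6) one feeds the one-dimensional (or $n$-dimensional) Sobolev embedding $H^{s_1}(\R^n)\hookrightarrow C^j_b(\R^n)$ into the weighted setting: apply it to $\px^{s_2}\partial_x^\alpha u$ after noting $\px^{s_2}\partial_x^\alpha u\in H^{s_1-j,0}\hookrightarrow C^0$, and the decay at infinity $C_{(o)}$ follows because $H^{s_1,s_2+\delta}$ functions have $\px^{s_2}\partial^\alpha u\in H^{s_1,\delta}$, whose elements vanish at infinity by density of $\mathscr S$; compactness of the embedding is again the negative-order argument. Finally, the algebra property (8) for $s_1>n/2$, $s_2\ge 0$ is the only genuinely nontrivial point: one proves it by Littlewood–Paley/Coifman–Meyer type estimates or, more elementarily in the spirit of \cite{AscanelliCappielloLL2006}, by using that for $s_2\ge 0$ one has the pointwise bound $\px^{s_2}\le C(\langle x-y\rangle^{s_2}\langle y\rangle^{s_2})$... actually $\px^{s_2}\le C_{s_2}(\langle x\rangle^{s_2}+\langle y\rangle^{s_2})$ is false for products; instead use $\langle x\rangle^{s_2}\lesssim \langle x-y\rangle^{s_2}+\langle y\rangle^{s_2}$ and split $\widehat{uv}=\hat u*\hat v$ according to which factor carries the higher frequency, bounding the low-frequency factor in $L^\infty$ via $s_1>n/2$ and Cauchy–Schwarz. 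The main obstacle is precisely this Moser-type estimate (8): one must handle simultaneously the frequency weight $\pd^{s_1}$ (requiring the Schauder/paraproduct splitting) and the spatial weight $\px^{s_2}$ (requiring the subadditivity $\langle x\rangle^{s_2}\lesssim\langle x-y\rangle^{s_2}+\langle y\rangle^{s_2}$, valid only for $s_2\ge 0$), and organize the four resulting terms so that each is controlled by $\|u\|_{H^{s_1,s_2}}\|v\|_{H^{s_1,s_2}}$; everything else is bookkeeping within the $\textrm{\textbf{SG}}$-calculus already recalled in the excerpt.
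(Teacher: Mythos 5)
The paper does not prove Proposition \ref{wSobolevproperties}; it simply cites \cite[Section 3.3]{Cordes} and \cite[Proposition 2.2]{AscanelliCappielloLL2006}. So there is no ``paper route'' to compare against, and your proposal must be judged on its own. Items (1)--(7) of your sketch are essentially correct and standard: ellipticity of $\px^{s_2}\pd^{s_1}$ in the $\textbf{\textrm{SG}}$-calculus with parametrix of opposite order gives (1)--(3), the Fourier intertwining $\mathcal F\px^{s_2}\pd^{s_1}=\pd^{s_2}\px^{s_1}\mathcal F$ combined with the equivalence in (2) gives (7), compactness follows from the fact that symbols of strictly negative $\textbf{\textrm{SG}}$-orders yield compact $L^2$-operators, and (4)--(6) reduce to (5) plus the classical Sobolev embedding once the commutators $[\partial^\alpha,\px^{s_2}]$ are observed to be lower order.

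Your treatment of item (8), however, contains a genuine conceptual muddle. You attempt to distribute the spatial weight via a subadditivity inequality $\langle x\rangle^{s_2}\lesssim\langle x-y\rangle^{s_2}+\langle y\rangle^{s_2}$ across what you describe as a splitting of $\widehat{uv}=\hat u*\hat v$. But that convolution lives in the frequency variable $\xi$, not in $x$; there is no $x$-convolution anywhere in the product $uv$, so a Peetre-type inequality in $x$ has nothing to act on. The spatial weight is \emph{pointwise multiplicative}, and the whole point is that it attaches trivially to one factor: $\px^{s_2}(uv)=(\px^{s_2}u)\cdot v$. Hence, using the equivalent norm $\|\pd^{s_1}(\px^{s_2}w)\|_{L^2}$ from item (2), one has
\begin{equation*}
\|uv\|_{H^{s_1,s_2}}\sim\left\|\pd^{s_1}\bigl((\px^{s_2}u)\,v\bigr)\right\|_{L^2}
\lesssim\left\|\px^{s_2}u\right\|_{H^{s_1}}\|v\|_{H^{s_1}}
\lesssim\|u\|_{H^{s_1,s_2}}\|v\|_{H^{s_1,s_2}},
\end{equation*}
where the middle step is the standard Schauder/Moser estimate in $H^{s_1}(\R^n)$ for $s_1>n/2$ (your paraproduct argument, applied on the Fourier side only), and the last step uses $\|v\|_{H^{s_1}}=\|v\|_{H^{s_1,0}}\le\|v\|_{H^{s_1,s_2}}$, which is precisely where the hypothesis $s_2\ge0$ enters via item (3). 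You should rewrite the proof of (8) along these lines: the only nontrivial analytic input is the unweighted $H^{s_1}$ algebra property, and the $x$-weight never needs to be split at all.
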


We conclude this section recalling the two main lower bound inequalities for pseudodifferential operators that we shall use in the sequel, that is sharp G{\aa}rding inequality with remainders and Fefferman-Phong inequality.
\begin{Th}\cite[Thm 4.2 p. 130]{KG} \label{thmg}
Let $a \in S^m(\R^n)$ such that $\textrm{Re}\, a(x,\xi) \geq 0$. Then there exist pseudodifferential operators $q(x,D)$ and $r(x,D)$ with symbols respectively $q \in S^m(\R^{2n})$ and $r \in S^{m-1}(\R^{2n})$ such that $a(x,D)= q(x,D)+r(x,D)$ and
$$\textrm{Re}\, \langle q(x,D) u,u \rangle_{L^2} \geq -C \|u\|^2_{H^{\frac{m-1}2}}, \qquad u \in \mathscr{S}(\R^n),$$
$$r(x,\xi) \sim \psi_1(\xi) D_x a(x,\xi) + \sum_{\alpha+\beta \geq 2 }\psi_{\alpha, \beta}(\xi)\partial_\xi^\alpha D_x^\beta a(x,\xi),$$
for some real-valued symbols $\psi_1 \in S^{-1}$ and $\psi_{\alpha, \beta} \in S^{\frac{\alpha-\beta}2}.$  
\end{Th}
\begin{Th} \cite{FP}
	Let $a \in S^m(\R^{2n})$ with $a(x,\xi) \geq 0$. Then 
	$$\langle a(x,D) u,u \rangle_{L^2} \geq -c \|u \|^2_{H^{\frac{m-2}2}}, \qquad u \in \mathscr{S}(\R^n),$$
	for some $c>0$.
	\end{Th}

\section{Smoothing estimates for the linear Cauchy problem}

In this section we prove Theorem \ref{smoothing_theorem}. 
\\

\noindent
{\it Proof of Theorem \ref{smoothing_theorem}.}
For $x, \xi \in \R$ and a large parameter $h \geq 1$ we define (cf. \cite[Section 2]{ABZsuff})
\begin{equation}\label{eq_lmabda_p_1}
\lambda_{p-1}(x,\xi):=M_{p-1} \omega\left(\frac\xi h\right) \int_{0}^{x} \langle y\rangle^{-\sigma} \psi\left( \frac{\langle y\rangle}{\langle\xi\rangle_h^{p-1}}\right)dy,
\end{equation}
and for $j = 2, \ldots, p-1$, 
\begin{equation}\label{eq_lambda_p_k}
\lambda_{p-j}(x,\xi):=M_{p-j} \omega\left(\frac\xi h\right)  \langle\xi\rangle_h^{-j+1} \int_{0}^{x} \langle y\rangle^{-\frac{p-j}{p-1}}
\psi\left( \frac{\langle y\rangle}{\langle\xi\rangle_h^{p-1}}\right)dy,
\end{equation}
where $M_{p-j}$, $j = 1, \ldots, p-1$, are positive constants to be settled later on and the functions $\omega\in C^\infty(\R)$, $\psi\in C^\infty_0(\R)$ satisfy:
\beqsn
&&\omega(y)=
\begin{cases}
	0 & |y|\leq 1\\
	-|y|^{p-1}/y^{p-1}& |y|\geq 2
\end{cases}\\
&&0\leq\psi(y)\leq 1\quad\forall y\in\R, 
\quad \psi(y)=
\begin{cases}
	1& |y|\leq\frac 12\\
	0& |y|\geq1.
\end{cases}
\eeqsn
The symbols $\lambda_{p-j}$, $j = 1, \ldots, p-1$, fulfill the  following estimates that can be derived as in \cite{ABZsuff}:
\beqsn
|\partial^{\alpha}_{\xi}\partial^{\beta}_{x} \lambda_{p-1}(x,\xi)| &\leq& C_{\alpha, \beta}M_{p-1} \langle \xi \rangle^{-\alpha}_{h} \langle x \rangle^{1-\sigma-\beta},
\\
|\partial^{\alpha}_{\xi}\partial^{\beta}_{x} \lambda_{p-j}(x,\xi)| &\leq& C_{\alpha, \beta}M_{p-j} \min\{
\langle \xi \rangle^{1-j}_{h} \langle x \rangle^{1-\frac{p-j}{p-1}},1
\}\langle \xi \rangle^{-\alpha}_{h} \langle x \rangle^{-\beta}, \quad j=2,...,p-1.
\eeqsn
These estimates imply in particular that $\lambda_{p-j} \in \textbf{\textrm{SG}}^{0,0}(\R^2)$ for any $j=1,\ldots,p-1$ because we are assuming $\sigma > 1$.
 Therefore the symbol
$$\Lambda =: \lambda_1 + \cdots + \lambda_{p-1} \in \textbf{\textrm{SG}}^{0,0}(\R^2).$$ Moreover, we also have $e^{\pm \Lambda} \in \textbf{\textrm{SG}}^{0,0}(\R^2)$. 

If the family parameter $h$ is large enough, say $h > h_0(M_{p-1},\ldots,M_1)$, then using Neumann series we see \cite{KB,KW} that the operator $e^{\Lambda}(x,D)=\textrm{op}(e^{\Lambda(x,\xi)})$ is invertible and its inverse is given by
$$
	(e^\Lambda)^{-1} = \hskip2pt ^R (e^{-\Lambda}) \circ \sum_{j\geq0}(-r)^j,
	$$
where $^R (e^{-\Lambda})(x,D)$ is the so-called reverse operator of $e^{- \Lambda}(x,D)$, introduced in \cite[Proposition 2.13]{KW} as the transposed of $e^{- \Lambda}(x,-D)$, defined as an oscillatory integral by
$$^{R}(e^{- \Lambda}(x,D))u(x) = Os - \iint e^{i\xi (x-y) - \Lambda(y,\xi)}u(y)\, dy \dslash \xi,
$$ and
$$
r(x,\xi) \sim \sum_{\alpha \geq 1} \frac{1}{\alpha!} \partial^{\alpha}_{\xi}\{ e^{\Lambda} D^{\alpha}_{x} e^{-\Lambda} \}(x,\xi)
$$
is in $\textbf{\textrm{SG}}^{-1,-1}(\R^2)\subset S^{-1}(\R^2)$ since its leading term is given by $-\partial_\xi D_x\Lambda$. Moreover, given $p\in S^m(\R^{2})$ we have the asymptotic expansion
$${e^\Lambda}(x,D)p(x,D)^R (e^{-\Lambda})(x,D)=p(x,D)+\textrm{op}\left(\sum_{\alpha+\beta \geq 1} \frac{1}{\alpha!\beta!} \partial^{\alpha}_{\xi}\{ \partial_\xi^\beta e^{\Lambda} D_x^\beta p D^{\alpha}_{x} e^{-\Lambda} \}(x,\xi)\right).$$ 

Therefore, we can pass from the original Cauchy problem \eqref{genCP} to the equivalent auxiliary Cauchy problem
\beqs
\label{CP_auxiliary}
\begin{cases}
	e^{\Lambda} P(t,x,D_t,D_x) \{e^{\Lambda}\}^{-1} v(t,x)= e^{\Lambda} f(t,x) & (t,x) \in [0,T]\times\R\cr
	u(0,x)= e^{\Lambda} g(x) & x\in\R
\end{cases}
\eeqs
in the unknown $v=e^\Lambda u$. If we find a solution $v$ to \eqref{CP_auxiliary} then $u = \{e^{\Lambda}\}^{-1}v$ solves \eqref{genCP}. In the sequel, choosing the constants $M_{p-j}$ in \eqref{eq_lmabda_p_1} and \eqref{eq_lambda_p_k} large enough, we shall derive a priori smoothing energy estimates for the problem \eqref{CP_auxiliary}.

\noindent 
Working as in \cite{ABZsuff} and \cite[Section 4]{ABmoser} and using the asymptotic expansion here above it is possible to prove that the operator $e^{\Lambda} (iP) \{e^{\Lambda}\}^{-1}$ presents the following structure:
\begin{align*}
		e^{\Lambda} (iP) \{e^{\Lambda}\}^{-1} &= \partial_t + ia_{p}(t,x)D^{p}_{x} \\
		&- \textrm{op}\left( pa_p(t,x)\xi^{p-1} \partial_x \lambda_{p-1}\right) - \Im a_{p-1} D_x^{p-1} + A_{p-1, 2} \\
		&- \textrm{op}\left( pa_p(t,x) \xi^{p-1} \partial_x \lambda_{p-2}\right) +A_{p-2,1} + A_{p-2, 2} \\
		&- \cdots - \\
		&- \textrm{op}\left( pa_p(t,x) \xi^{p-1} \partial_x \lambda_{2}\right) +A_{2,1} + A_{2, 2} \\
		&- \textrm{op}\left( pa_p(t,x) \xi^{p-1} \partial_x \lambda_{1} \right)+A_{1,1} + A_{1, 2} \\
		& + A_0
\end{align*}
where, for $j = 1, \ldots, p-1$ and $k = 1, 2$:

\begin{itemize}
\item[-] $A_{p-j,k}$ is of order $p-j$ and depends only on $M_{p-1}, \ldots, M_{p-(j-1)}$, 
\item[-] $A_{p-j, 1}$ is real-valued,
$$
|A_{p-j,1}(t,x,\xi)| \leq C(M_{p-1},\ldots, M_{p-(j-1)}) \langle \xi \rangle^{p-j}_{h} \langle x \rangle^{-\frac{p-j}{p-1}},
$$
\beqs\label{i}
\Re\, \langle A_{p-j,2}(t,\cdot,D) u(t,\cdot), u(t,\cdot) \rangle_{L^2} = 0,
\eeqs
\item[-] $A_0$ is of order zero.
\end{itemize} 
Let us remark that, by \eqref{i}, we may not worry about the terms $A_{p-j,2}$ in the energy method.

Now, from the definition of the symbols $\lambda_{p-j}$, if we localize the operator $e^{\Lambda} (iP) \{e^{\Lambda}\}^{-1}$ in the zone $|\xi| \geq 2h$ we obtain
\begin{align*}
	e^{\Lambda} iP \{e^{\Lambda}\}^{-1} &= \partial_t + ia_{p}(t,x)D^{p}_{x} \\
	&+ M_{p-1}\textrm{op}\left( pa_p(t,x)  |\xi|^{p-1} \langle x \rangle^{-\sigma}\right) - \Im a_{p-1} D_x^{p-1} + A_{p-1, 2} \\
	&+ M_{p-2}\textrm{op}\left( pa_p(t,x)|\xi|^{p-1} \langle \xi \rangle^{-1}_{h} \langle x \rangle^{-\frac{p-2}{p-1}} \right)+A_{p-2,1} + A_{p-2, 2} \\
	&+ \cdots + \\
	&+ M_{2}\textrm{op}\left( pa_p(t,x) |\xi|^{p-1} \langle \xi \rangle_{h}^{-(p-3)} \langle x \rangle^{-\frac{2}{p-1}}\right) +A_{2,1} + A_{2, 2} \\
	&+ M_{1}\textrm{op}\left( p a_p(t,x) |\xi|^{p-1} \langle \xi \rangle^{-(p-2)}_{h} \langle x \rangle^{-\frac{1}{p-1}}\right) +A_{1,1} + A_{1, 2} \\
	& + A_0.
\end{align*}

Next we shall apply sharp G{\aa}rding inequality with remainders to the terms of order $p-1, p-2,...,3$ in $e^{\Lambda} iP \{e^{\Lambda}\}^{-1}$, the Fefferman-Phong inequality to the terms of order 2 and finally the sharp G{\aa}rding inequality to the terms of order 1, in order to produce smoothing energy estimates. This is the reason why the assumption \eqref{a1} on $\textrm{Im}\, D_x a_2$ is weaker than the one for higher order coefficients, that is \eqref{33}; indeed, Fefferman-Phong and sharp G{\aa}rding inequality do not produce real-valued remainders. We shall explain the procedure in detail for the terms of order $p-1$. We have (in the zone $|\xi| \geq 2h$, where $|\xi|\geq \frac2{\sqrt 5}\langle\xi\rangle_h$)
\beqsn
	 M_{p-1}pa_p(t,x) |\xi|^{p-1} \langle x \rangle^{-\sigma} - \Im a_{p-1} (t,x)\xi^{p-1} &\geq& \left\{C_{a_p}\left(\frac2{\sqrt 5}\right)^{p-1} M_{p-1} - C_{p-1}\right\} \langle x \rangle^{-\sigma}\langle \xi \rangle^{p-1}_{h}
\\
&:=&C_{p-1}  \langle x \rangle^{-\sigma}\langle \xi \rangle^{p-1}_{h}.
\eeqsn
So, if $M_{p-1}$ is large enough we get $C_{p-1} > 0$ and therefore
$$
c_{p-1}(t,x,\xi) := M_{p-1}pa_p(t,x) |\xi|^{p-1} \langle x \rangle^{-\sigma} - \Im a_{p-1} (t,x)\xi^{p-1} - C_{p-1}\langle x \rangle^{-\sigma} \langle \xi \rangle^{p-1}_{h} \geq 0.
$$
Sharp G{\aa}rding inequality applied to $c_{p-1}(t,x,D) $ then gives (see Theorem $4.2$ in \cite{KG}):
$$
M_{p-1}\textrm{op}\left( pa_p(t,x)  |\xi|^{p-1} \langle x \rangle^{-\sigma}\right)  - \Im a_{p-1}(t,x) D_x^{p-1} = C_{p-1}\langle x \rangle^{-\sigma} \langle D_x \rangle^{p-1}_{h} + Q_{p-1} + R_{p-2},
$$
where $Q_{p-1}$ is a positive operator in the sense that $$\Re\, \langle Q_{p-1}(t,x,D)u, u \rangle \geq 0, \qquad\forall u \in \mathscr{S}(\R),$$ and 
$$
R_{p-2} (t,x,\xi)\sim 
\psi_1(\xi)D_x  c_{p-1}(t,x,\xi) +\sum_{\alpha+\beta\geq2}\psi_{\alpha,\beta}(\xi)\partial_\xi^\alpha D_x^\beta c_{p-1}(t,x,\xi), 
$$
for suitable symbols $\psi_{\alpha, \beta}(\xi) \in S^{\frac{\alpha - \beta}{2}}$, $\psi_{\beta}(\xi) \in S^{-1}$. 
Due to the hypothesis on $a_{p-1}$ (following \cite{ABZsuff} once more) it turns out that the remainder $R_{p-2}$ has the following structure 

\begin{align*}
	R_{p-2} = A'_{p-2,1} + A'_{p-2, 2} 
	+ \cdots + 
	 A'_{2,1} + A'_{2, 2} 
	+ A'_{1,1} + A'_{1, 2}
	+ A'_0, 
\end{align*}
for some operators $A'_{p-j,k}$ satisfying the same properties as $A_{p-j,k}$. In this way we get, for some new operators $A_{p-j,k}$ satisfying the same properties as before:
\begin{align*}
	e^{\Lambda} iP \{e^{\Lambda}\}^{-1} &= \partial_t + ia_{p}(t,x)D^{p}_{x} \\
	&+ C_{p-1}\langle x \rangle^{-\sigma} \langle D_x \rangle^{p-1}_{h} + Q_{p-1} + A_{p-1, 2} \\
	&+ M_{p-2}\textrm{op}\left( pa_p(t,x) |\xi|^{p-1} \langle \xi \rangle^{-1}_{h} \langle x \rangle^{-\frac{p-2}{p-1}}\right) +A_{p-2,1} + A_{p-2, 2} \\
	&+ \cdots + \\
	&+ M_{2}\textrm{op}\left( pa_p(t,x) |\xi|^{p-1} \langle \xi \rangle_{h}^{-(p-3)} \langle x \rangle^{-\frac{2}{p-1}}\right) +A_{2,1} + A_{2, 2} \\
	&+ M_{1}\textrm{op}\left( p a_p(t,x) |\xi|^{p-1}\langle \xi \rangle^{-(p-2)}_{h} \langle x \rangle^{-\frac{1}{p-1}} \right) +A_{1,1} + A_{1, 2} \\
	& + A_0.
\end{align*}
Next we observe that 
$$
C_{p-1}\langle x \rangle^{-\sigma} \langle D \rangle^{p-1}_{h} = C_{p-1} \{\langle D \rangle^{\frac{p-1}{2}}_{h} \langle x \rangle^{-\frac{\sigma}{2}} \} \{\langle x \rangle^{-\frac{\sigma}{2}} \langle D \rangle^{\frac{p-1}{2}}_{h}\}  - C_{p-1} [\langle D \rangle^{\frac{p-1}{2}}_{h},\langle x \rangle^{-\sigma} ] \langle D \rangle^{\frac{p-1}{2}}_{h}.
$$
Since the asymptotic expansion of the symbol of $C_{p-1}[\langle D \rangle^{\frac{p-1}{2}}_{h},\langle x \rangle^{-\sigma} ] \langle D \rangle^{\frac{p-1}{2}}_{h}$ is given by
$$
 C_{p-1}\sum_{\alpha\geq 1} \frac{1}{\alpha!} \partial^{\alpha}_{\xi} \langle \xi \rangle^{\frac{p-1}{2}}_{h} D^{\alpha}_{x} \langle x \rangle^{-\sigma} \langle \xi \rangle^{\frac{p-1}{2}}_{h}  \in \textbf{\textrm{SG}}^{p-2, -\sigma-1}(\R^2)
$$
and $C_{p-1}$ depends only on $M_{p-1}$, we can write
\begin{align*}
	C_{p-1} [\langle D \rangle^{\frac{p-1}{2}}_{h},\langle x \rangle^{-\sigma} ] \langle D \rangle^{\frac{p-1}{2}}_{h} = A"_{p-2,1} + A"_{p-2, 2} 
	+ \cdots + 
	 A"_{2,1} + A"_{2, 2} + A"_{1,1} + A"_{1, 2} 
	+ A"_0, 
\end{align*}
for some operators $A"_{p-j,k}$ satisfying the same properties as $A_{p-j,k}$. In a nutshell, in the zone $|\xi| \geq 2h$ we can rewrite, for some new operators $A_{p-j,k}$ satisfying the same properties as before, the conjugated operator as

\begin{align*}
	e^{\Lambda} (iP) \{e^{\Lambda}\}^{-1} &= \partial_t + ia_{p}(t,x)D^{p}_{x} \\
	&+ C_{p-1} \{\langle D \rangle^{\frac{p-1}{2}}_{h} \langle x \rangle^{-\frac{\sigma}{2}} \} \{\langle x \rangle^{-\frac{\sigma}{2}} \langle D \rangle^{\frac{p-1}{2}}_{h}\} + Q_{p-1} + A_{p-1, 2} \\
	&+ M_{p-2}\textrm{op}\left( pa_p(t,x) |\xi|^{p-1} \langle \xi \rangle^{-1}_{h} \langle x \rangle^{-\frac{p-2}{p-1}}\right) +A_{p-2,1} + A_{p-2, 2} \\
	&+ \cdots + \\
	&+ M_{2}\textrm{op}\left( pa_p(t,x) |\xi|^{p-1} \langle \xi \rangle_{h}^{-(p-3)} \langle x \rangle^{-\frac{2}{p-1}}\right) +A_{2,1} + A_{2, 2} \\
	&+ M_{1}\textrm{op}\left( p a_p(t,x) |\xi|^{p-1} \langle \xi \rangle^{-(p-2)}_{h} \langle x \rangle^{-\frac{1}{p-1}}\right) +A_{1,1} + A_{1, 2} \\
	& + A_0
\end{align*}
with $C_{p-1}=:C_{M_{p-1}}$ a positive constant depending only on $M_{p-1}$.
Repeating this process again $p-4$ times, and then applying Fefferman-Phong inequality to the terms of order 2 and sharp G{\aa}rding inequality to terms of level 1, following \cite{ABZsuff} again, we end up with 
\begin{align*}
	e^{\Lambda} (iP) \{e^{\Lambda}\}^{-1} &= \partial_t + ia_{p}(t,x)D^{p}_{x} \\
	&+ C_{M_{p-1}} \{\langle D \rangle^{\frac{p-1}{2}}_{h} \langle x \rangle^{-\frac{\sigma}{2}} \} \{\langle x \rangle^{-\frac{\sigma}{2}} \langle D \rangle^{\frac{p-1}{2}}_{h}\} + Q_{p-1} + A_{p-1, 2} \\
	&+ C_{M_{p-1}, M_{p-2}} \{\langle D \rangle^{\frac{p-2}{2}}_{h} \langle x \rangle^{-\frac{p-2}{2(p-1)}} \} \{\langle x \rangle^{-\frac{p-2}{2(p-1)}} \langle D \rangle^{\frac{p-2}{2}}_{h}\} + Q_{p-2}+ A_{p-2, 2} \\
	&+ \cdots + \\
	&+ C_{M_{p-1}, M_{p-2}, \ldots, M_{2}} \{\langle D \rangle^{\frac{2}{2}}_{h} \langle x \rangle^{-\frac{2}{2(p-1)}} \} \{\langle x \rangle^{-\frac{2}{2(p-1)}} \langle D \rangle^{\frac{2}{2}}_{h}\} + Q_{2}+ A_{2, 2} \\
	&+ C_{M_{p-1}, M_{p-2}, \ldots, M_{2}, M_{1}}\{\langle D \rangle^{\frac{1}{2}}_{h} \langle x \rangle^{-\frac{1}{2(p-1)}} \} \{\langle x \rangle^{-\frac{1}{2(p-1)}} \langle D \rangle^{\frac{1}{2}}_{h}\} + Q_{1} + A_{1, 2} \\
	& + A_0,
\end{align*}
where, for $j = 1, \ldots, p-1$, the positive constants $C_{M_{p-1}, M_{p-2}, \ldots, M_{p-j}}$ depend only on $M_{p-1},...,M_{p-j}$, the operators $Q_{p-j}$ are positive in the sense that
$$\Re\, \langle Q_{p-j}(t,x,D)u, u \rangle \geq 0, \qquad\forall u \in \mathscr{S}(\R),$$ and $$\Re\,\langle A_{p-j,2}u,u \rangle = 0.$$

Finally, writing 
\begin{align*}
\partial_t &= e^{\Lambda} (iP) \{e^{\Lambda}\}^{-1} - ia_p(t,x)D^{p}_{x} \\
&- C_{M_{p-1}} \{\langle D \rangle^{\frac{p-1}{2}}_{h} \langle x \rangle^{-\frac{\sigma}{2}} \} \{\langle x \rangle^{-\frac{\sigma}{2}} \langle D \rangle^{\frac{p-1}{2}}_{h}\} + Q_{p-1} + A_{p-1, 2} \\
	&- C_{M_{p-1}, M_{p-2}} \{\langle D \rangle^{\frac{p-2}{2}}_{h} \langle x \rangle^{-\frac{p-2}{2(p-1)}} \} \{\langle x \rangle^{-\frac{p-2}{2(p-1)}} \langle D \rangle^{\frac{p-2}{2}}_{h}\} + Q_{p-2}+ A_{p-2, 2} \\
	&- \cdots - \\
	&- C_{M_{p-1}, M_{p-2}, \ldots, M_{2}} \{\langle D \rangle^{\frac{2}{2}}_{h} \langle x \rangle^{-\frac{2}{2(p-1)}} \} \{\langle x \rangle^{-\frac{2}{2(p-1)}} \langle D \rangle^{\frac{2}{2}}_{h}\} + Q_{2}+ A_{2, 2} \\
	&- C_{M_{p-1}, M_{p-2}, \ldots, M_{2}, M_{1}}\{\langle D \rangle^{\frac{1}{2}}_{h} \langle x \rangle^{-\frac{1}{2(p-1)}} \} \{\langle x \rangle^{-\frac{1}{2(p-1)}} \langle D \rangle^{\frac{1}{2}}_{h}\} + Q_{1} + A_{1, 2} \\
	& - A_0,
\end{align*}
we obtain (using the fact that $a_p(t,x)$ is real-valued)

\begin{align*}
		\partial_{t} \|v(t)\|^{2}_{L^2} &= 2 \Re\, \langle \partial_t v(t), v(t) \rangle \\
		&= 2 \Re\, \langle e^{\Lambda} (iP) \{e^{\Lambda}\}^{-1} v(t), v(t) \rangle - 2 \sum_{j = 1}^{p-1} \Re\, \langle Q_{p-j} v(t), v(t) \rangle\\
		&- C_{M_{p-1}} \|v(t)\|^{2}_{ H^{\frac{p-1}{2}, -\frac{\sigma}{2}} } - \sum_{j = 2}^{p-1} C_{M_{p-1}, \ldots, M_{p-(j+1)}} \|v(t)\|^{2}_{ H^{ \frac{p-j}{2}, -\frac{p-j}{2(p-1)}} },
\end{align*}
which implies
\begin{align}\label{A}
\nonumber
\partial_{t} \|v(t)\|^{2}_{L^2} &\leq \|e^{\Lambda} P \{e^{\Lambda}\}^{-1} v(t)\|_{L^2}^{2} + \|v(t)\|_{L^2}^{2}  \\
&- C_{M_{p-1}} \|v(t)\|^{2}_{ H^{ \frac{p-1}{2}, -\frac{\sigma}{2} } } - \sum_{j = 2}^{p-1} C_{M_{p-1}, \ldots, M_{p-(j+1)}} \|v(t)\|^{2}_{ H^{ \frac{p-j}{2}, -\frac{p-j}{2(p-1)}} }.
\end{align}
On the other hand, noticing that 
\begin{align*}
2 \Re\, \langle e^{\Lambda} (iP) \{e^{\Lambda}\}^{-1} v(t), v(t) \rangle &
= 2 \Re\, \langle \langle D \rangle^{\frac{p-1}{2}}_{h}\langle x \rangle^{-\frac{\sigma}{2}}\langle x \rangle^{\frac{\sigma}{2}} \langle D \rangle^{-\frac{p-1}{2}}_{h} e^{\Lambda} (iP) \{e^{\Lambda}\}^{-1} v(t),   v(t) \rangle \\
&= 2 \Re\, \langle \langle x \rangle^{\frac{\sigma}{2}} \langle D \rangle^{-\frac{p-1}{2}}_{h} e^{\Lambda} (iP) \{e^{\Lambda}\}^{-1} v(t), \langle x \rangle^{-\frac{\sigma}{2}} \langle D \rangle^{\frac{p-1}{2}}_{h} v(t) \rangle \\
&\leq \|e^{\Lambda} (iP) \{e^{\Lambda}\}^{-1}v(t)\|^{2}_{H^{-\frac{p-1}{2},\frac{\sigma}{2}}} + \|v(t)\|^{2}_{H^{\frac{p-1}{2}, -\frac{\sigma}{2}} },
\end{align*}
we also obtain 
\begin{align}\label{B}\nonumber
	\partial_{t} \|v(t)\|^{2}_{L^2} &\leq \|e^{\Lambda} (iP) \{e^{\Lambda}\}^{-1}v(t)\|^{2}_{H^{-\frac{p-1}{2},\frac{\sigma}{2}}} + \|v(t)\|_{L^2}^{2}  \\
	&+(1- C_{M_{p-1}}) \|v(t)\|^{2}_{ H^{\frac{p-1}{2}, -\frac{\sigma}{2}} } - \sum_{j = 2}^{p-1} C_{M_{p-1}, \ldots, M_{p-(j+1)}} \|v(t)\|^{2}_{ H^{\frac{p-j}{2}, -\frac{p-j}{2(p-1)}} }.
\end{align}

Gronwall's inequality then gives, looking respectively to \eqref{A} and \eqref{B},
{\small
\begin{multline*}
\|v(t)\|_{L^2}^{2} + \int_{0}^{t} \Big(\|v(\tau)\|^{2}_{ H^{ \frac{p-1}{2}, -\frac{\sigma}{2}} } 	+ \sum_{j = 2}^{p-1}  \|v(\tau)\|^{2}_{ H^{ \frac{p-j}{2}, -\frac{p-j}{2(p-1)}} }\Big) d\tau \\
\leq 
e^{Ct} \left\{\|v(0)\|_{L^2}^{2} + \int_0^t \|e^{\Lambda} P \{e^{\Lambda}\}^{-1} v(\tau)\|_{L^2}^{2} d\tau   \right\}
\end{multline*}
}
and 
{\small
	\begin{align}
		\|v(t)\|_{L^2}^{2} + \int_{0}^{t} \Big( \|v(\tau)\|^{2}_{ H^{ \frac{p-1}{2}, -\frac{\sigma}{2} } }	+ &\sum_{j = 2}^{p-1}  \|v(\tau)\|^{2}_{ H^{ \frac{p-j}{2}, -\frac{p-j}{2(p-1)}} }\Big) d\tau  \\ \nonumber
		& \leq 
		e^{Ct} \left\{\|v(0)\|_{L^2}^{2} + \int_0^t \|e^{\Lambda} P \{e^{\Lambda}\}^{-1} v(\tau)\|^{2}_{H^{-\frac{p-1}{2},\frac{\sigma}{2}}} d\tau   \right\}.
	\end{align}
}
To obtain the above energy inequalities for $H^{m}$ Sobolev norms, we just apply the same procedure to the operator $\langle D_x \rangle^{m} P \langle D_x \rangle^{-m}$ which has the same structure as $P$, see \cite{ABZsuff} for more details. Therefore one gets, respectively,
{\small
	\begin{align*}
		\|v(t)\|^{2}_{H^{m}} + \int_{0}^{t} \Big( \|v(\tau)\|^{2}_{ H^{m+\frac{p-1}{2}, -\frac{\sigma}{2} } }	+ &\sum_{j = 2}^{p-1}  \|v(\tau)\|^{2}_{ H^{ m+ \frac{p-j}{2}, -\frac{p-j}{2(p-1)}} } \Big) d\tau \\ \nonumber
		&\leq 
		C_{m, T} \left\{\|v(0)\|^{2}_{H^{m}} + \int_0^t \|e^{\Lambda} P \{e^{\Lambda}\}^{-1} v(\tau)\|^{2}_{H^{m}} d\tau  \right\}
	\end{align*}
}
and
{\small
	\begin{align*}
		\|v(t)\|^{2}_{H^{m}} + \int_{0}^{t} \Big( \|v(\tau)\|^{2}_{ H^{ \left(m+\frac{p-1}{2}, -\frac{\sigma}{2} \right)} }	+ &\sum_{j = 2}^{p-1}  \|v(\tau)\|^{2}_{ H^{ \left(m+ \frac{p-j}{2}, -\frac{p-j}{2(p-1)} \right)} } \Big) d\tau \\
		& \leq 
		C_{m,T} \left\{\|v(0)\|^{2}_{H^{m}} + \int_0^t \|e^{\Lambda} P \{e^{\Lambda}\}^{-1} v(\tau)\|^{2}_{H^{m-\frac{p-1}{2},\frac{\sigma}{2}}} d\tau   \right\}.
	\end{align*}
}

From the above a priori smoothing energy estimates one gets the following auxiliary result:

\begin{Prop}
	Consider the operator $P$ in \eqref{op} under the hypotheses of Theorem \ref{smoothing_theorem}. Then 
	\begin{itemize}
		\item[(i)] Given $m \geq 0 $, $g \in H^m, f \in C([0,T];H^{m})$, there exists a unique solution $v \in C([0,T];H^m)$ to \eqref{CP_auxiliary} satisfying
		\begin{align}\label{energy!!!}
			\|v(t)\|^{2}_{H^{m}} + \int_{0}^{t} \Big( \|v(\tau)\|^{2}_{ H^{m+\frac{p-1}{2}, -\frac{\sigma}{2} } }	+ &\sum_{j = 2}^{p-1}  \|v(\tau)\|^{2}_{ H^{ m+ \frac{p-j}{2}, -\frac{p-j}{2(p-1)}} } \Big) d\tau \\ \nonumber
			&\leq 
			C_{m, T} \left\{\|e^{\Lambda} g\|^{2}_{H^{m}} + \int_0^t \|e^{\Lambda} f(\tau)\|^{2}_{H^{m}} d\tau  \right\}
		\end{align}
		\item[(ii)] Given $m \geq 0 $, $g \in H^m, f \in L^{2}([0,T];H^{m-\frac{p-1}{2}, \frac{\sigma}{2}})$, there exists a unique solution $v \in C([0,T];H^m)$ to \eqref{CP_auxiliary} satisfying
		{\small
			\begin{align}\label{energy!!!!}
				\|v(t)\|^{2}_{H^{m}} + \int_{0}^{t} \Big( \|v(\tau)\|^{2}_{ H^{ m+\frac{p-1}{2}, -\frac{\sigma}{2}} } + &\sum_{j = 2}^{p-1}  \|v(\tau)\|^{2}_{ H^{ m+ \frac{p-j}{2}, -\frac{p-j}{2(p-1)}} } \Big) d\tau \\ \nonumber
				& \leq 
				C_{m,T} \left\{\|e^{\Lambda} g\|^{2}_{H^{m}} + \int_0^t \|e^{\Lambda}f(\tau)\|^{2}_{H^{m-\frac{p-1}{2},\frac{\sigma}{2}}} d\tau   \right\}.
			\end{align}
		}
	\end{itemize}
\end{Prop}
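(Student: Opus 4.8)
The plan is to promote the a priori smoothing energy estimates obtained above to a genuine existence, uniqueness and persistence-of-regularity statement for \eqref{CP_auxiliary}. Since $e^{\Lambda},\{e^{\Lambda}\}^{-1}\in\mathrm{op}(\textbf{\textrm{SG}}^{0,0}(\R^2))$, the conjugated datum $e^{\Lambda}g$ and forcing $e^{\Lambda}f$ lie in the same (weighted) Sobolev spaces as $g,f$, so the right-hand sides of \eqref{energy!!!}, \eqref{energy!!!!} are finite and we may work with \eqref{CP_auxiliary} directly. Uniqueness is immediate: two solutions $v_1,v_2\in C([0,T];H^m)$ of \eqref{CP_auxiliary} with the same data have difference $w$ solving the homogeneous problem with zero datum, and the estimate \eqref{energy!!!} — valid for $w$ smooth and extended to $w\in C([0,T];H^m)$ by mollifying the equation, all the operators involved being continuous on the relevant scale of spaces — forces $\|w(t)\|^2_{H^m}\le C_{m,T}\cdot 0=0$ for every $t$.

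For existence I would run a Friedrichs-type regularization. Fix $\chi\in C_0^\infty(\R)$ with $0\le\chi\le1$ and $\chi\equiv1$ near the origin, put $J_\varepsilon=\mathrm{op}(\chi(\varepsilon\xi))$, let $\mathcal L(t):=e^{\Lambda}(iP)\{e^{\Lambda}\}^{-1}-\partial_t$ be the spatial part of the conjugated operator, and consider the regularized problem $\partial_t v_\varepsilon+J_\varepsilon\mathcal L(t)J_\varepsilon v_\varepsilon= iJ_\varepsilon^2e^{\Lambda}f$, $v_\varepsilon(0)=J_\varepsilon^2e^{\Lambda}g$. For fixed $\varepsilon>0$ the spatial part is a bounded operator on $H^m$ depending continuously on $t$, so the Cauchy--Lipschitz theorem gives a unique $v_\varepsilon\in C^1([0,T];H^m)$. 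The key observation is that, by self-adjointness of $J_\varepsilon$, the energy computation for $v_\varepsilon$ reduces to the one already performed above applied to $J_\varepsilon v_\varepsilon$: the reduction to the zone $|\xi|\ge2h$, the iterated sharp G{\aa}rding inequality and the commutator reorganizations into the operators $A_{p-j,k}$, $Q_{p-j}$ are unaffected, the $J_\varepsilon$ being uniformly bounded in $\mathrm{op}(S^0)$ and producing only uniformly bounded lower-order errors, and the data terms are absorbed using that $J_\varepsilon$ is uniformly bounded on the weighted Sobolev scale. Hence $v_\varepsilon$ satisfies \eqref{energy!!!} and \eqref{energy!!!!} with constants independent of $\varepsilon$, with $J_\varepsilon v_\varepsilon$ in place of $v_\varepsilon$ inside the smoothing norms on the left.

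From these uniform bounds, $\{v_\varepsilon\}$ is bounded in $L^\infty([0,T];H^m)$ and $\{J_\varepsilon v_\varepsilon\}$ in $L^2([0,T];H^{m+\frac{p-1}{2},-\frac\sigma2})$ and in each $L^2([0,T];H^{m+\frac{p-j}{2},-\frac{p-j}{2(p-1)}})$, while the equation bounds $\partial_t v_\varepsilon$ in $L^2([0,T];H^{m-p})$. By Banach--Alaoglu I extract a subsequence with $v_\varepsilon\to v$ weakly-$*$ in $L^\infty([0,T];H^m)$ and $J_\varepsilon v_\varepsilon\to v$ weakly in the smoothing spaces (and, after cutting off in $x$, $v_\varepsilon\to v$ strongly in $C([0,T];H^{m'}_{\mathrm{loc}})$, $m'<m$, via Aubin--Lions); since $J_\varepsilon\to I$ strongly, this is enough to pass to the limit in the \emph{linear} regularized equations and obtain that $v$ solves \eqref{CP_auxiliary} with $v(0)=e^{\Lambda}g$, while weak lower semicontinuity of the Hilbert-space norms transfers \eqref{energy!!!}, \eqref{energy!!!!} to $v$. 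Finally $v$ is a priori only weakly continuous into $H^m$; strong continuity $v\in C([0,T];H^m)$ follows in the usual way by running the energy estimate both forward and backward in $t$ to exclude jumps of $t\mapsto\|v(t)\|_{H^m}$ and combining with weak continuity. Alternatively, existence could be deduced by a duality argument from the analogous energy estimate for the transposed backward Cauchy problem, as is classical for $p$-evolution operators.

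I expect the one genuinely delicate point to be the $\varepsilon$-uniformity of these estimates, i.e.\ checking that the regularization is compatible with the iterated sharp G{\aa}rding construction — that the commutators $[J_\varepsilon,\langle x\rangle^{-s}\langle D\rangle^{r}_h]$ and the rearrangements producing the $A_{p-j,k}$ and $Q_{p-j}$ stay of the claimed orders with $\varepsilon$-independent seminorms. This is routine given the $\textbf{\textrm{SG}}$-calculus recalled in Section~\ref{preliminaries}, but it is the technical heart of the argument; everything else is soft functional analysis.
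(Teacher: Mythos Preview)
Your proposal is correct and in fact supplies substantially more detail than the paper does. In the paper this Proposition is not given a separate proof: after deriving the a priori smoothing energy inequalities for the conjugated operator $e^{\Lambda}(iP)\{e^{\Lambda}\}^{-1}$ (via the iterated sharp G{\aa}rding argument and Gronwall), the authors simply write ``From the above a priori smoothing energy estimates one gets the following auxiliary result'' and state the Proposition, implicitly relying on the standard existence theory for $p$-evolution operators developed in the cited works \cite{ABZsuff,KB}. Your Friedrichs regularization $J_\varepsilon\mathcal L(t)J_\varepsilon$ is a legitimate and standard way to make that step self-contained; the observation that self-adjointness of $J_\varepsilon$ transfers the quadratic form $\Re\langle\mathcal L\,\cdot,\cdot\rangle$ to $J_\varepsilon v_\varepsilon$, so that the positivity of the $Q_{p-j}$ and the smoothing gain survive with $\varepsilon$-independent constants, is exactly the right mechanism. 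The duality alternative you mention at the end is closer to what is actually done in \cite{ABZsuff} and is probably what the authors have in mind. Either route yields the Proposition; the paper simply does not spell this out.
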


Using that $\{e^{\Lambda}\}^{\pm1}$ have order zero and the inequalities \eqref{energy!!!}, \eqref{energy!!!!}, we see that the solution $u = \{e^{\Lambda}\}^{-1} v$ of \eqref{genCP} satisfies the smoothing energy inequalities \eqref{energy!} and \eqref{energy!!}.

\qed

\begin{Rem}\label{nontrappingremark}
	Notice that to prove Theorem \ref{smoothing_theorem} we do not need to impose a non-trapping condition for the principal symbol of our operator as in \cite{chihara, KPRV}. Nevertheless, we observe that the assumptions \eqref{1} and \eqref{2} imply such a condition. As a matter of fact, if we consider the Hamilton system 
	\begin{equation}
	\begin{cases}
		\dot x(s)=p a_p(t,x(s))\xi(s)^{p-1},\\[2mm]
		\dot\xi(s)=-\partial_x a_p(t,x(s))\xi(s)^p
	\end{cases}
\end{equation}
with initial condition $(x(0), \xi(0))=(x_0,\xi_0)$ for $\xi_0 \neq 0$, then
using the change of variable
  \begin{equation}\label{eq:rho-def}
  	\rho(s):=\frac{1}{\xi(s)^{p-1}},
  \end{equation}
we are reduced to the equivalent system
\begin{equation}\label{eq:rho-system}
	\begin{cases}
		\dot x(s)=\displaystyle \frac{p a_p(t,x(s))}{\rho(s)},\\[3mm]
		\dot\rho(s)=(p-1)\partial_x a_p(t,x(s)).
	\end{cases}
\end{equation}
From the second equation we have that 
$$ |{\dot\rho(s)}|\leq (p-1)\sup_{(t,x)\in [0,T] \times \R}|{\partial_x a_p(t,x)}|=:M <+\infty.
$$
Integrating between $0$ and $s>0$ we get	\begin{equation}\label{eq:rho-linear-bound}
	|\rho(s)|
	\leq |\rho(0)|+M s.
\end{equation}
Since $\xi(s) \neq 0$ for any $s \in \R$, then by \eqref{eq:rho-def}, $\rho(s)$ has constant sign and then the same holds for $\dot x(s)$. Then \begin{equation}\label{eq:x-speed-lower}
		|\dot x(s)|
		=\frac{p a_p(t,x(s))}{|\rho(s)|}
		\geq
		\frac{p C_p}{|\rho(0)|+M s}.
	\end{equation}
	Integrating between $0$ and $s>0$ we get
	\[
	|x(s)-x(0)|
	\geq
	\int_0^s 
	\displaystyle\frac{p C_p}{|\rho(0)|+M s'} \, ds'=	\displaystyle\frac{p C_p}{M}
	\log\left(1+\frac{M s}{|\rho(0)|}\right).\]

Since the right-hand side tends to $+\infty$ then also  $|x(s)|\to +\infty$ for $s \to +\infty$, hence $x(s)$ escapes from any compact subset of $\R$ for $s \to +\infty.$
\end{Rem}

\section{Application to the nonlinear Cauchy problem}\label{nonlinearapp}

Our next goal is to deal with the following problem 
\begin{equation} \label{nonlinearCP} \begin{cases}
		P(t,x,D_t,D_x)u= Q(u,\bar u, D_x^r u)  \\ u(0,x)=g(x) \end{cases}, \qquad (t,x) \in [0,T]\times \R
\end{equation}
where $P$ is of the form \eqref{op} and satisfies the assumptions of Theorem \ref{smoothing_theorem} and $Q$ is a nonlinear term of the form 
\begin{equation}\label{nonlinearterm}
	Q(u,\bar u, D_x^r u)= u^n \bar{u}^q D_x^r u,
	\end{equation}
for some $n,q,r \in \N_0$ such that $r=0, \ldots, p-1,$ and $n+q \geq 1$. Here, by convention, $D_x^0 u=u.$ This implies that the nonlinear term contains at least a factor $u$ or one of its derivatives of order less than or equal to $p-1$ and cannot depend only on $\bar{u}.$ Notice that we can include in our analysis several relevant physical models as the ones mentioned in the Introduction. 
Using the same ideas we can also deal with a polynomial nonlinear term given by a linear combination of terms of the form \eqref{nonlinearterm}. 

Before stating the main result of the present section, let us introduce the following auxiliary space.

\begin{Def}\label{X_T} Given $\sigma>1$, let us denote by $2N$ the smallest even number larger than or equal to $\sigma$.  Given $m > 0$ such that $m - \frac{p-1}{2} \in 2\N$, $m> (4N+\frac{11}{2})(p-1)+3$ and considered a real number $\tilde{m} \in \left(\frac{m}{2} + \frac{3}{4}(p-1) + \frac{1}{2},m -(2N+1)(p-1)-p\right]$, we denote by
	$X_T$ the space of all functions $u :[0,T]\times \R \to \C$ such that 
	\beqs\label{defXT}
	\| u\|_{X_T} := \left[\|u(t,\cdot)\|^2_{L^\infty_t H_x^m}\right. &+& \left.\int_0^T\!\!\! \left( \|u(\tau)\|^2_{H^{m+\frac{p-1}2, - \frac{\sigma}2}}+\sum_{k=2}^{p-1}
	\|u(\tau)\|^2_{H^{m+\frac{p-k}2, -\frac{p-k}{2(p-1)}}}\right)d\tau \right.
	\\\nonumber
	&+& \left.\| u \|_{L^\infty_t H_x^{\tilde m, 2N}}^2+\| \partial_tu \|_{L^\infty_t H_x^{\tilde m, 2N}}^2 \right]^{\frac12}<\infty.
	\eeqs
\end{Def}
We shall prove the following result.

\begin{Th}\label{mainNL}
Let $P$ be an operator of the form \eqref{op} satisfying the assumptions of Theorem \ref{smoothing_theorem} and consider the Cauchy problem \eqref{nonlinearCP}.
Then, for every given $m > 0$ such that $m - \frac{p-1}{2} \in 2\N$, $m> (4N+\frac{11}{2})(p-1)+3$ and for every  $\tilde{m} \in \left(\frac{m}{2} + \frac{3}{4}(p-1) + \frac{1}{2},m -(2N+1)(p-1)-p\right]$ we have the following: for every given $g\in \mathscr S(\R)$, the Cauchy problem \eqref{nonlinearCP} admits a unique solution 
$u\in X_{T^*}$ for some $T^* \in (0,T].$
\end{Th}

Let us start by re-writing the right hand side of \eqref{nonlinearCP} in the following way. If $r \geq 1$ we can write
\begin{equation}\label{stella}u^n(t,x)\bar u^q(t,x) D_x^ru (t,x)= \tilde f(t,x,u, \bar u, D_x^ru)+g^n(x)\bar g^q(x)D_x^ru(t,x)
\end{equation}
where
\beqs\label{termine noto}
\tilde f(t,x,u, \bar u, D_x^ru):&=&(u^n(t,x)\bar u^q(t,x)-g^n(x)\bar{g}^q(x))D_x^ru(t,x)
\\\nonumber
&=&\int_0^t\partial_\tau(u^n(\tau,x)\bar u^q(\tau,x))d\tau \cdot D_x^ru(t,x).
\eeqs
Since $g\in\mathscr S(\R)$, the term $g^n\bar g^q D_x^ru$ in \eqref{stella} can be moved on the left hand side of the equation in \eqref{nonlinearCP}, and it trivially fulfills the assumptions for the terms of order $r$. This means that we can write the Cauchy problem \eqref{nonlinearCP} in the following equivalent way:

\begin{equation} \label{nonlinearCP'} \begin{cases}
		\tilde P(t,x,D_t,D_x)u(t,x)= \tilde f(t,x,u,\bar u, D_x^ru) \\ u(0,x)=g(x) \end{cases}, \qquad (t,x) \in [0,T]\times \R,
\end{equation}
where $\tilde P= P-g^n\bar g^q D_x^ru$ satisfies the same assumptions as $P$. Namely, we can apply Theorem \ref{smoothing_theorem} to $\tilde P$.
We can argue similarly if $r=0$, that is $Q(u, \bar u)= u^n \bar{u}^q,$ with $n+q \geq 2, n\geq 1$. Namely, arguing as before we can write
$$
u^n(t,x)\bar u^q(t,x) =\tilde f(t,x,u, \bar u)+ g^{n-1}(x) \bar g^q(x) u(t,x),
$$
where \begin{eqnarray}\label{termine noto2}\tilde{f}(t,x,u ,\bar u)&=&
(u^{n-1}(t,x)\bar u^q(t,x)-g^{n-1}(x)\bar g^q(x) )u(t,x) \\ &=&
\int_0^t\partial_\tau(u^{n-1}(\tau,x)\bar u^q(\tau,x))\, d\tau \cdot u(t,x), \nonumber
\end{eqnarray}
and again we can move $g^{n-1}(x)\bar g^q(x) u(t,x)$ to the linear part of the equation. \\
In what follows we are going to consider only the case when $\tilde f$ is given by \eqref{termine noto}: the case \eqref{termine noto2} works in the same way. Moreover, we are going to denote, for simplicity, $\tilde f(t,x,u,\bar u,D_x^ru)$ by $\tilde f(u)$ when we only need to take care of the dependence on $(u,\bar u, D_x^ru)$, by $\tilde f(t,u)$ or $\tilde f(t,x,u)$ when we need to specify also the dependence on time and/or space variables.\\
Let us so focus, from now on,  on the Cauchy problem \eqref{nonlinearCP'} with $\tilde f(t,u)$ given by \eqref{termine noto}, and consider the map
$$\Phi(u)= W(t,0)g +\int_0^t W(t,\tau) \tilde f(\tau,u)\, d\tau,$$
where, for every $t,\tau \in [0,T]$, $W(t,\tau) h$ denotes the solution of the linear homogeneous Cauchy problem 
\begin{equation} \label{homlinCP} \begin{cases}
		\tilde P(t,x,D_t,D_x)w(t,x)= 0  \\ w(\tau,x)=h(x) \end{cases}, \qquad (t,x) \in [\tau,T]\times \R.
\end{equation}
The idea is to prove that $\Phi$ maps $X_{T^\ast}$ into itself if $T^\ast\leq T$ is small enough, and that it is a contraction on a closed ball of suitable radius in this space, so it admits a unique fixed point $u=\Phi(u)$ which is the unique solution to \eqref{nonlinearCP'}, and so of \eqref{nonlinearCP}. In order to prove this we shall make use of some technical results that we collect in the following subsection.

\begin{Rem}
To better understand the assumptions on $m$,$\tilde m$ in Theorem \ref{mainNL}, we point out the following. The assumption $m - \frac{p-1}{2} \in 2\N$ is needed to apply Leibniz rule in the upcoming Lemma \ref{preliminarylemman'} and in the proof of the main theorem; the lower bound in the choice of $$\tilde{m} \in \left(\frac{m}{2} + \frac{3}{4}(p-1) + \frac{1}{2},m -(2N+1)(p-1)-p\right]$$ is needed in the next Lemma \ref{preliminarylemman'}, as we explain while estimating the term $J_1$, while the upper bound is needed in the proof of the main theorem. The choice of $m> (4N+\frac{11}{2})(p-1)+3$ is needed to guarantee that the interval $\left(\frac{m}{2} + \frac{3}{4}(p-1) + \frac{1}{2},m -(2N+1)(p-1)-p\right]$ is not empty. 
\end{Rem}

\subsection{Some useful estimates}
In the following we shall prove some preliminary results which will be used in the proof of Theorem \ref{mainNL}. In all the next results, given two positive quantities $A$ and $B$, we shall use the notation $A \lesssim B$ if there exists a positive constant $C=C(T)$ and bounded for $T \in (0,1]$ such that $A\leq CB$.
\\
 
This first lemma is a technical result which provides an estimate of the nonlinearity in \eqref{nonlinearCP'} using Sobolev norms in space and $L^\infty$ norms in time.

\begin{Lemma}\label{preliminarylemman'}
Let $u \in X_T$. The following estimate holds:
$$
\int_0^t \|\tilde f(\tau, u) \|_{H^{m-\frac{p-1}{2}, \frac\sigma2}_x}^2 d\tau \leq C_m T\|u\|^{2}_{X_T}(\|u\|^{2(n+q)}_{X_T}+\|g\|^{2(n+q)}_{H^m}).
$$
\end{Lemma}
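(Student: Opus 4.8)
The plan is to estimate $\|\tilde f(\tau,u)\|_{H^{m-\frac{p-1}{2},\sigma/2}}$ pointwise in $\tau$ and then integrate in time. Recall from \eqref{termine noto} that
$$\tilde f(\tau,x,u,\bar u, D_x^r u) = \Big(\int_0^\tau \partial_s\big(u^n(s,x)\bar u^q(s,x)\big)\,ds\Big)\cdot D_x^r u(\tau,x),$$
so the factor in brackets vanishes at $\tau=0$ and is, by the fundamental theorem of calculus, bounded in any norm by $\tau$ times the sup over $s\in[0,\tau]$ of $\|\partial_s(u^n\bar u^q)(s,\cdot)\|$ in that norm. This is exactly where the gain of the factor $T$ (after integrating $\int_0^t \tau\, d\tau \leq \tfrac{t^2}{2} \leq \tfrac{T}{2}\,t$, or more crudely $\int_0^t(\cdots)d\tau$ with each integrand $\lesssim \tau^2 \le T\tau$) comes from, which produces the $C_m T$ in the statement. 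I will first split $\partial_s(u^n\bar u^q) = n u^{n-1}\partial_s u\,\bar u^q + q\, u^n \bar u^{q-1}\partial_s \bar u$ (interpreting empty products as $1$) and note that $\partial_s u$ is controlled in $H^{\tilde m, 2N}$ by $\|u\|_{X_T}$, by definition of $X_T$ in \eqref{defXT}.

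Next, the core is a product estimate in the weighted Kato–Sobolev space $H^{m-\frac{p-1}{2},\sigma/2}$. Since $m-\frac{p-1}{2}\in 2\mathbb{N}$ is an even integer, by Proposition \ref{wSobolevproperties}(5) the norm is equivalent to $\sum_{|\alpha|\le m-\frac{p-1}{2}} \|\langle x\rangle^{\sigma/2}\partial_x^\alpha(\cdot)\|_{L^2}$, so I can apply the Leibniz rule to distribute the $\le m-\frac{p-1}{2}$ derivatives over the factors $u^{n-1}$, $\partial_s u$, $\bar u^q$ (or the analogous grouping), and over $D_x^r u$ in the outer product. For each resulting term I distribute the weight $\langle x\rangle^{\sigma/2}$ onto the factor $\partial_s u$ (which carries the $H^{\tilde m,2N}$ control, and $2N\ge\sigma$), and place all other factors in $L^\infty$ via the Sobolev embedding Proposition \ref{wSobolevproperties}(6): a factor with up to $m-\frac{p-1}{2}$ derivatives of $u$ needs $\tilde m$ (or $m$) large enough that $\tilde m > 1/2 + (\text{number of derivatives landing on it})$ — this is precisely the role of the lower bound $\tilde m > \frac m2 + \frac34(p-1)+\frac12$ and will be the delicate bookkeeping step. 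One term, where the maximal number of derivatives falls on the $\partial_s u$ factor, must be kept in $L^2$ (with weight) and estimated by $\|u\|_{X_T}$, while the remaining factors, carrying few derivatives, go into $L^\infty$ and are bounded by $\|u\|_{X_T} + \|g\|_{H^m}$ using that $H^m$ (resp. $H^{\tilde m}$) is an algebra (Proposition \ref{wSobolevproperties}(8)) and controls $L^\infty$. Counting powers: each of the $n+q$ "non-differentiated" copies of $u$ or $\bar u$ contributes a factor $\|u\|_{X_T}$ or $\|g\|_{H^m}$, the $\partial_s u$ contributes one $\|u\|_{X_T}$, and the outer $D_x^r u$ contributes one $\|u\|_{X_T}$, giving $\|u\|_{X_T}^2(\|u\|_{X_T}+\|g\|_{H^m})^{n+q} \le \|u\|_{X_T}^2(\|u\|_{X_T}^{2(n+q)}+\|g\|_{H^m}^{2(n+q)})$ after a binomial/Young bound.

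\textbf{Main obstacle.} The genuinely technical point is the derivative-counting argument: verifying that in every Leibniz term exactly one factor can legitimately be kept in the weighted $L^2$ norm while all others embed into $L^\infty$, i.e. that whenever many derivatives pile onto one of the "spectator" factors there are correspondingly few left on the rest, so the numerology $\tilde m > \frac m2 + \frac34(p-1) + \frac12$ and $m > (4N+\frac{11}{2})(p-1)+3$ suffices for Proposition \ref{wSobolevproperties}(6) to apply. I expect to organize this by a worst-case analysis (the term $J_1$ referenced in the Remark), distinguishing whether the "heavy" factor is $D_x^r u$ or one of the inner copies; handling the weight $\langle x\rangle^{\sigma/2}$ requires only $\langle x\rangle^{\sigma/2}\le\langle x\rangle^{N}$ and that $H^{\tilde m,2N}$ is in the definition of $X_T$, so the weight causes no real trouble beyond being parked on the right factor. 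Everything else — the FTC gain of $T$, the algebra property, the Sobolev embedding — is routine.
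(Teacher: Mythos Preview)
Your plan has a genuine gap in the derivative-counting step, precisely in the case where most of the $m-\frac{p-1}{2}$ Leibniz derivatives fall on the outer factor $D_x^r u$. In that case the total number of derivatives on $u$ in this factor is up to $m-\frac{p-1}{2}+r$, which for $r=p-1$ equals $m+\frac{p-1}{2}$. This is strictly more than $m$, so you cannot place this factor in $L^\infty$ (Sobolev embedding would require $>m+\frac{p-1}{2}+\frac12$ derivatives) nor in $L^2$ using only the $L^\infty_t H^m_x$ piece of $\|u\|_{X_T}$. Your stated plan --- weight always on $\partial_s u$, all other factors into $L^\infty$ --- therefore breaks down here, and your claim that ``the weight causes no real trouble beyond being parked on the right factor'' is exactly where the argument fails.

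The paper's proof handles this (their term $J_1$) by using the \emph{smoothing} part of the $X_T$ norm, namely $\int_0^T \|u(\tau)\|^2_{H^{m+\frac{p-1}{2},-\sigma/2}}\,d\tau$, which you never invoke. The mechanism is to split the weight as $\langle x\rangle^{\sigma}=\langle x\rangle^{-\sigma}\cdot\langle x\rangle^{2\sigma}$ (after squaring), putting a \emph{negative} weight $\langle x\rangle^{-\sigma/2}$ on the heavy factor $D_x^{j+r}u$ so that it is estimated by the smoothing norm, and compensating with $\langle x\rangle^{\sigma}$ on the light factor $D_x^\ell\partial_s(u^n\bar u^q)$, which (since $\ell\le\frac12(m-\frac{p-1}{2})<\tilde m-\frac12$) can then be placed in $L^\infty$ and controlled by $\|\partial_t(u^n\bar u^q)\|_{H^{\tilde m,2N}}$. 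In the complementary regime $J_2$ (few derivatives on $D_x^r u$) the positive weight does go on $D_x^{j+r}u$, now controllable in $L^\infty$ via $H^{\tilde m,2N}$, and the remaining factor is estimated directly in $H^m$ without passing through the time-integral representation. So the weight distribution is not routine: it must change sign depending on which factor is heavy, and the smoothing norm in $X_T$ is essential, not optional.
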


\begin{proof} Let $\tilde f$ be of the form \eqref{termine noto}. 
First of all we have, denoting $\Delta_{m,p} = m - \frac{p-1}{2}$,
\begin{eqnarray*}\int_0^t \| \tilde{f}(\tau,u) \|_{H^{m-\frac{p-1}{2},\frac\sigma2}_x}^2 d\tau &\lesssim & \sum_{k=0}^{\Delta_{m,p}} 
	\int_0^t \|\langle x \rangle^{\frac{\sigma}{2}}D_x^k \tilde f(\tau,x,u)\|_{L^2_x}^2d\tau
	\\ &\lesssim & \sum_{k=0}^{\Delta_{m,p}} \sum_{j+\ell=k}\frac{k!}{j!\ell!} \int_0^t \int_{\R} \langle x \rangle^{\sigma} |D_x^{j+r} u|^2 |D_x^\ell (u^n\bar u^q-g^n\bar g^q)|^2\, dx d\tau \\
	&=& \sum_{k=0}^{\Delta_{m,p}} \sum_{\stackrel{j+\ell=k}{j \geq \Delta_{m,p}/2}}\frac{k!}{j!\ell!} \int_0^t \int_{\R} \langle x \rangle^{\sigma}|D_x^{j+r} u|^2 |D_x^\ell (u^n\bar u^q-g^n\bar g^q)|^2\, dx d\tau \\
	&&+  \sum_{k=0}^{\Delta_{m,p}
	} \sum_{\stackrel{j+\ell=k}{j<\Delta_{m,p}/2 }}\frac{k!}{j!\ell!} \int_0^t \int_{\R} \langle x \rangle^{\sigma}|D_x^{j+r} u|^2 |D_x^\ell (u^n\bar u^q-g^n\bar g^q)|^2\, dx d\tau 
	\\
	&=:&  J_1+J_2.
\end{eqnarray*} 
Concerning the term $J_1$, using \eqref{termine noto} we have:
$$
j \geq \frac{\Delta_{m,p}}{2}  \implies \ell \leq \frac{1}{2}\Delta_{m,p} = \frac{m}{2} - \frac{p-1}{4},
$$
therefore, using the Sobolev embedding and the definition of the space $X_T$ we get
\begin{align*}
	J_1 &= \sum_{k=0}^{\Delta_{m,p}} \sum_{\stackrel{j+\ell=k}{j \geq \Delta_{m,p}/2}}\frac{k!}{j!\ell!} \int_0^t \int_{\R} |\langle x \rangle^{-\sigma/2}D_x^{j+r} u|^2 \left| \langle x \rangle^{\sigma} \int_0^\tau D^{\ell}_{x}\partial_s\{u^n \overline{u}^{q}\} ds \right|^2dxd\tau \\
	&\lesssim T^2 \sum_{k=0}^{\Delta_{m,p}} \sum_{\stackrel{j+\ell=k}{j \geq \Delta_{m,p}/2}} \sup_{t,x} |\langle x \rangle^{\sigma}D^{\ell}_{x}\partial_s\{u^n \overline{u}^{q}\}|^{2}
	\int_0^t \int_{\R} |\langle x \rangle^{-\sigma/2}D_x^{j+r} u|^2 dxd\tau \\
	&\lesssim T^2 \sum_{k=0}^{\Delta_{m,p}} \sum_{\stackrel{j+\ell=k}{j \geq \Delta_{m,p}/2}}
	\|\langle x \rangle^{\sigma}D^{\ell}_{x}\partial_s\{u^n \overline{u}^{q}\}\|^{2}_{L^\infty_t H^{\frac12+\varepsilon}_x}
	\int_0^t \|\langle x \rangle^{-\frac{\sigma}{2}} D_{x}^{j+r} u\|^2_{L^2_x} d\tau \\
	&\lesssim
	T^2 \|\partial_s\{u^n \overline{u}^{q}\}\|^{2}_{L^\infty_t H_x^{\tilde{m},\sigma}} \int_0^t \| u\|^{2}_{H_x^{m + \frac{p-1}{2},-\frac{\sigma}{2}}} d\tau \\
	&\lesssim
	T^2 \|\partial_s\{u^n \overline{u}^{q}\}\|^{2}_{L^\infty_t H_x^{\tilde{m},2N}} \int_0^t \| u\|^{2}_{H_x^{m + \frac{p-1}{2},-\frac{\sigma}{2}}} d\tau
\end{align*}
since $ \frac{1}{2} + \ell \leq \frac{1}{2} + \frac{m}{2} - \frac{p-1}{4} < \frac{m}{2} + \frac{3(p-1)}{4} + \frac{1}{2} < \tilde{m}$. Due to algebra properties of $H^{\tilde{m},2N}$ we get
$$
J_1 \lesssim T^{2} \|u\|^{2+2(n+q)}_{X_T}.
$$

For $J_2$ we have, since $j + r + \frac12 \leq \Delta_{m,p}/2 + p-1 +\frac12 \leq \frac{m}{2} + \frac{3(p-1)}{4} + \frac12 < \tilde{m}$,

\begin{align*}
	J_2 &= \sum_{k=0}^{\Delta_{m,p}} \sum_{\stackrel{j+\ell=k}{j<\Delta_{m,p}/2}}\frac{k!}{j!\ell!} \int_0^t \int_{\R}|\langle x \rangle^{\frac{\sigma}{2}}D_x^{j+r}u|^2 \cdot |D_x^\ell (u^n\bar u^q-g^n\bar g^q)|^2\, dx d\tau \\
	&\lesssim T  \sum_{k=0}^{\Delta_{m,p}} \sum_{\stackrel{j+\ell=k}{j<\Delta_{m,p}/2}}
	\sup_{t,x} |\langle x \rangle^{\frac{\sigma}{2}}D_x^{j+r}u|^2 \sup_{t} \{\|D_x^\ell(u^n\bar u^q)\|^{2}_{L^2_x} +\|D_x^\ell (g^n\bar g^q)\|^2_{L^2_x} \}
	\\ 
	&\lesssim T \|u\|^{2}_{ L^\infty_t H_x^{\Delta_{m,p}/2+p-1+\frac12+\epsilon,\frac{\sigma}{2}}} (\sup_{t}\|u\|_{H_x^m}^{2(n+q)}+\|g\|_{ H^m}^{2(n+q)}) \\
	&\lesssim T \|u\|^{2}_{L^\infty_t H_x^{\tilde{m},2}} (\sup_{t} \|u\|_{H_x^{m}}^{2(n+q)} + \|g\|_{H^m}^{2(n+q)}) \\
	&\leq T \|u\|^2_{X_T}(\|u\|_{X_T}^{2(n+q)}+\|g\|_{ H^m}^{2(n+q)}).
\end{align*}
\end{proof}

The following lemma is a variant for higher order equations of \cite[Lemma 6.1.3]{KPRV}. 

\begin{Lemma}\label{KenigPonceRV}
	Let $h \in H^{s+n(p-1) , n }$, $n\in\N$. Then, for every $\tau \in [0,T],$ the following estimate holds:
	\begin{equation}\label{stimaKPRV1}
		\sup_{t \in [0,T]} \| W(t,\tau)h\|^{2}_{H^{s, n }} \leq
		 \|h\|^{2}_{H^{s, n}} + \sum_{j=1}^{n} C_{j} T^j \| h\|_{H^{s+j(p-1),n-j}}^2 \ ,
	\end{equation}
	where the positive constants $C_j$ depend on $s,T$ and are bounded as $T\to 0^+$. In particular we have:
	\begin{equation}\label{stimaKPRV2}
		\sup_{t \in [0,T]} \| W(t,\tau)h\|^2_{H^{s, n}} \leq C(1+T^{n})\| h\|^2_{H^{s+n(p-1) , n}}
	\end{equation}
	for some constant $C$ depending on $s, T$ and bounded as $T\to 0^+$.
\end{Lemma}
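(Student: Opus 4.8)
The plan is to prove \eqref{stimaKPRV1} by induction on $n$ and then read off \eqref{stimaKPRV2} as an immediate corollary. The base case $n=0$ is nothing but the (non-smoothing part of the) energy estimate of Theorem \ref{smoothing_theorem} applied to the homogeneous problem \eqref{homlinCP}: since $\tilde P$ satisfies the hypotheses of Theorem \ref{smoothing_theorem}, part (i) with $f=0$ gives $\sup_{t}\|W(t,\tau)h\|_{H^s}^2\lesssim \|h\|_{H^s}^2$, the implicit constant being bounded as $T\to 0^+$. Throughout, by a standard regularization/density argument (on the coefficients and on $h$), we may assume that $W(\cdot,\tau)h$ is smooth and decays fast enough in $x$ to make the manipulations below — in particular multiplication by the weight $\langle x\rangle^n$ — legitimate; the estimate \eqref{stimaKPRV1} then extends to all $h\in H^{s+n(p-1),n}$ by density.

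For the inductive step, assume \eqref{stimaKPRV1} holds at level $n-1$ for every base exponent. Fix $h\in H^{s+n(p-1),n}$, set $w(t):=W(t,\tau)h$ and $v(t):=\langle x\rangle^n w(t)$, and write $\tilde P=D_t+\mathcal L(t)$ with $\mathcal L(t)=a_p(t)D_x^p+\sum_{j=0}^{p-1}a_j(t,x)D_x^j$. From $\tilde P w=0$ one gets
\[
\tilde P v=[\mathcal L(t),\langle x\rangle^n]\,w=:F(t),\qquad v(\tau)=\langle x\rangle^n h .
\]
The key observation is that, since $a_p$ is $x$-independent and every $a_j\in\mathcal B^\infty$ in $x$, the commutator $[\mathcal L(t),\langle x\rangle^n]$ is a differential operator of order at most $p-1$ in $D_x$ whose coefficients are bounded, together with all their $x$-derivatives, by $C\langle x\rangle^{n-1}$, uniformly for $t\in[0,T]$ (the top-order part $a_p[D_x^p,\langle x\rangle^n]$ produces the order-$(p-1)$ term, the lower-order terms of $\mathcal L$ the rest; each $D_x^k\langle x\rangle^n$ with $k\ge 1$ is $O(\langle x\rangle^{n-1})$). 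Hence $[\mathcal L(t),\langle x\rangle^n]$ maps $H^{s+p-1,n-1}$ continuously into $H^s$ and
\[
\|F(t)\|_{H^s}\lesssim \|w(t)\|_{H^{s+p-1,\,n-1}}=\|W(t,\tau)h\|_{H^{s+p-1,\,n-1}} .
\]

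Now apply the $H^s$ energy estimate of Theorem \ref{smoothing_theorem}(i) (valid for $\tilde P$) to the Cauchy problem $\tilde P v=F$, $v(\tau)=\langle x\rangle^n h$; discarding the positive integral terms on the left-hand side and using $\|\langle x\rangle^n h\|_{H^s}\lesssim\|h\|_{H^{s,n}}$,
\[
\sup_{t}\|v(t)\|_{H^s}^2\lesssim \|h\|_{H^{s,n}}^2+\int_\tau^t\|F(\tau')\|_{H^s}^2\,d\tau'\lesssim \|h\|_{H^{s,n}}^2+T\,\sup_{\tau'}\|W(\tau',\tau)h\|_{H^{s+p-1,\,n-1}}^2 .
\]
Inserting the inductive hypothesis at level $n-1$ with base exponent $s+p-1$, i.e. $\sup_{\tau'}\|W(\tau',\tau)h\|_{H^{s+p-1,n-1}}^2\le\|h\|_{H^{s+p-1,n-1}}^2+\sum_{j=1}^{n-1}C_jT^j\|h\|_{H^{s+(j+1)(p-1),\,n-1-j}}^2$, then multiplying through by $T$ and reindexing $j\mapsto j+1$ turns the right-hand side into $\sum_{j=1}^{n}\widetilde C_jT^j\|h\|_{H^{s+j(p-1),\,n-j}}^2$, the constants being again bounded as $T\to 0^+$. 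Since $\|v(t)\|_{H^s}$ and $\|w(t)\|_{H^{s,n}}$ are equivalent norms (Proposition \ref{wSobolevproperties}), combining the three displays above yields \eqref{stimaKPRV1} at level $n$ and closes the induction. Finally, \eqref{stimaKPRV2} follows at once: by monotonicity of the weighted Sobolev scale (Proposition \ref{wSobolevproperties}), every $\|h\|_{H^{s+j(p-1),\,n-j}}$ with $0\le j\le n$ is dominated by $\|h\|_{H^{s+n(p-1),\,n}}$, while $\sum_{j=0}^{n}c_jT^j\le C(1+T^n)$ on any bounded interval of $T$.

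The part requiring the most care is the commutator analysis in the inductive step: one must verify that $[\mathcal L(t),\langle x\rangle^n]$ is genuinely of order $p-1$ in $D_x$ with coefficients of size $\langle x\rangle^{n-1}$ (together with all their derivatives), uniformly in $t\in[0,T]$ — this is precisely the ``one lost power of spatial decay for $p-1$ gained derivatives'' trade-off that makes the recursion self-improving. The remaining points are bookkeeping: the energy estimate of Theorem \ref{smoothing_theorem} is applied at the successively larger exponents $s,\,s+p-1,\,s+2(p-1),\dots$ (harmless, since $\tilde P$ satisfies its hypotheses at every level), and a density/regularization step is needed to legitimize multiplying the solution $w$ by the weight $\langle x\rangle^n$.
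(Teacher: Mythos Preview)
Your proof is correct and follows essentially the same strategy as the paper's own argument: both exploit that the commutator of $\tilde P$ (equivalently, $\mathcal L(t)$) with the weight $\langle x\rangle^k$ belongs to $\textbf{\textrm{SG}}^{p-1,k-1}$, then feed this into the energy estimate \eqref{energy!} to trade one unit of spatial weight for $p-1$ derivatives, and repeat. The only cosmetic difference is that the paper iterates with successive weights $\langle x\rangle,\langle x\rangle^2,\ldots,\langle x\rangle^n$ (and correspondingly decreasing Sobolev exponents $s+(n-1)(p-1),s+(n-2)(p-1),\ldots,s$), whereas you jump directly to $\langle x\rangle^n$ and induct on $n$; after reindexing, the two recursions are identical.
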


\begin{proof}
	
	We shall consider $\tau = 0$, because the general case $0 \leq \tau \leq T$ follows similarly. Let then $w = W(t,0)h$ be the solution to \eqref{homlinCP} with $\tau=0$. Since, by assumption, $h \in H^{s+n(p-1)}$, we get from \eqref{energy!} that 
	\begin{equation}\label{seventh_son_of_a_seventh_son}
	\sup_{t\in [0,T]} \|w(t)\|_{H^{s+n(p-1)}} \leq C \|h\|_{H^{s+n(p-1)}}.
	\end{equation}
	We have
	$$
	\tilde P (\langle x \rangle w) = [\tilde P, \langle x \rangle] w \in H^{s+(n-1)(p-1)},
	$$
	because $[\tilde P, \langle x \rangle] \in \textbf{\textrm{SG}}^{p-1,0}$. Hence, since $$\langle x \rangle w(0) =\langle x \rangle h \in H^{s+n(p-1), 1} \subset H^{s+(n-1)(p-1)},$$ we obtain (from the energy inequality and \eqref{seventh_son_of_a_seventh_son})

	\begin{eqnarray}\nonumber
		\|w(t)\|^{2}_{H^{s +(n-1)(p-1), 1}} &=&  \|\langle x \rangle w(t) \|^{2}_{H^{s+(n-1)(p-1)}}
\\\nonumber
&\lesssim& \|\langle x \rangle h\|^{2}_{H^{s+(n-1)(p-1)}} +  \int_{0}^{t} \|[\tilde P, \langle x \rangle] w(\tau)\|^{2}_{H^{s+(n-1)(p-1)}} d\tau  \\\nonumber
		&\lesssim&  \| h\|^{2}_{H^{s+(n-1)(p-1), 1 }} + T \sup_{t \in [0,T]}\|w(t)\|^{2}_{H^{s+n(p-1)}} \\ \label{black'}
		&\lesssim  & \|h\|^{2}_{H^{s+(n-1)(p-1), 1 }} + T \|h\|^{2}_{H^{s+n(p-1)}}.
	\end{eqnarray}
Let us now iterate this procedure for a second time. Since (by \eqref{black'}) $w \in H^{s + (n-1)(p-1), 1}$ and $[\tilde P, \langle x \rangle^{2}] \in \textbf{\textrm{SG}}^{p-1, 1}$, we have
	$
	\tilde P (\langle x \rangle^{2} w) = [\tilde P, \langle x \rangle^{2}] w \in  H^{s+(n-2)(p-1)}.
	$
	Hence, using the fact that $\langle x \rangle^{2}w(0) = \langle x \rangle^{2}h \in H^{s+n(p-1),2} \subset H^{s+n(p-1)}$, the energy inequality \eqref{energy!} applied to $\langle x \rangle^{2} w$ gives
	\begin{eqnarray*}
		\|w(t)\|^{2}_{H^{s+(n-2)(p-1), 2}} &= & \|\langle x \rangle^{2} w(t) \|^{2}_{H^{s+(n-2)(p-1)}} \\
		&\lesssim&  \|\langle x \rangle^{2} h\|^{2}_{H^{s+(n-2)(p-1)}} +  \int_{0}^{t} \|[\tilde P, \langle x \rangle^{2}] w(\tau)\|^{2}_{H^{s+(n-2)(p-1)}} d\tau  \\
&\lesssim&  \| h\|^{2}_{H^{s+(n-2)(p-1),2 }} + T \sup_{t \in [0,T]}\|w(t)\|^{2}_{H^{s+(n-1)(p-1),1}}
\\
&\lesssim& \| h\|^{2}_{H^{s+(n-2)(p-1),2 }} + T\|h\|^{2}_{H^{s+(n-1)(p-1), 1 }} + T^2 \|h\|^{2}_{H^{s+n(p-1)}}.
	\end{eqnarray*}
Iterating the above argument $j\geq 2$ times we get
\beqsn\|w(t)\|^{2}_{H^{s+(n-j)(p-1), j}} &=&  \|\langle x \rangle^{j} w(t) \|^{2}_{H^{s+(n-j)(p-1)}}
\\
&\lesssim &\| h\|^{2}_{H^{s+(n-j)(p-1),j }} + T\|h\|^{2}_{H^{s+(n-j+1)(p-1), j-1 }}
\\
&& +T^2\|h\|^{2}_{H^{s+(n-j+2)(p-1), j-2}}+... + T^j \|h\|^{2}_{H^{s+n(p-1)}}
\eeqsn
and for $j=n$ we obtain \eqref{stimaKPRV1}.
The estimate \eqref{stimaKPRV2} follows directly from \eqref{stimaKPRV1}.
\end{proof}
Finally, in what follows we shall also make use of the following lemma:

\medskip

{\bf{Lemma 6.0.1 in \cite{FS}}}\label{FS_Schauder}
{\sl{
Let $f,g \in H^s(\R^n), s >n/2,$ such that $f, g \in H^{n/2+\varepsilon,2N}(\R^n)$ for some $\varepsilon >0, N\in\N$. Then $fg \in H^{s,2N}(\R^n)$ and
$$\| fg \|^2_{H^{s,2N}} \lesssim \|f\|^2_{H^{n/2+\varepsilon, 2N}}\|g\|_{H^s}^2 + \|g\|^2_{H^{n/2+\varepsilon,2N}}\|f\|_{H^s}^2.$$ }}

\subsection{Proof of Theorem \ref{mainNL}}

Let us fix $u$ in a ball of radius $R>0$ in $X_T$ and let $v=\Phi(u)$ be the solution of the linear Cauchy problem
\begin{equation} \label{linearizedCP} \begin{cases}
		\tilde P(t,x,D_t,D_x)v(t,x)= \tilde f(t,u)  \\ v(\tau,x)=g(x) \end{cases}, \qquad (t,x) \in [\tau,T]\times \R.
\end{equation}
Let us first check that $v\in X_T$. By Lemma \ref{preliminarylemman'} we have $\tilde{f} \in L^{2}([0,T]; H^{\tilde{m} - \frac{p-1}{2}, \frac{\sigma}{2}})$ for any $u \in X_T$, so the linear smoothing estimate \eqref{energy!!} of Theorem \ref{smoothing_theorem} implies
\beqs
\|v(t)\|_{H^{m}}^2 + \int_0^t \left( \| v(\tau)\|^2_{H_x^{m+\frac{p-1}2, -\frac\sigma2}}\right.&+&\left.\sum_{k=2}^{p-1}\| v(\tau)\|^2_{H_x^{m+\frac{p-k}2, -\frac{p-k}{2(p-1)}}} \right)d\tau 
\\\nonumber
&\leq& C_{m,T} \left(\|g \|_{H^{m}}^2+ \int_0^t \| \tilde f(\tau,u)\|^2_{H_x^{m-\frac{p-1}{2},\frac{\sigma}{2}}}d\tau\right).
\eeqs
By Lemma \ref{preliminarylemman'} we get for every $t \in [0,T]$:
\beqsn
\|v(t)\|_{H^{m}}^2 +\int_0^t\left( \| v(\tau)\|^2_{H_x^{m+\frac{p-1}2, -\frac\sigma2}}\right.&+&\left.\sum_{k=2}^{p-1}\| v(\tau)\|^2_{H_x^{m+\frac{p-k}2, -\frac{p-k}{2(p-1)}}} \right)d\tau \\
&\leq &C_{m,T} \left(\|g \|_{H^{m}}^2+  T\|u\|^{2n}_{X_T}(\|u\|^{2(n+q)}_{X_T}+\|g\|^{2(n+q)}_{H^m}) \right) \\ 
&\leq & C_{m,T} \left(\|g \|_{H^{m}}^{2}+ T\|u\|^{4n+2q}_{X_T}+T\|u\|^{2n}_{X_T}\|g\|^{2(n+q)}_{H^m}\right)<\infty 
\eeqsn
where $C=C_{m,T}>0$ is bounded for $T\to 0^+$.  
\\
By definition of the space $X_T$ it remains to estimate
 \beqsn
\|v\|_{L^\infty_t H_x^{\tilde{m},2N}}^2+\|\partial_tv\|_{L^\infty_t H_x^{\tilde{m},2N}}^2&=&\|v\|_{L^\infty_t H_x^{\tilde{m},2N}}^2+\|i\tilde f(u) - \ds\sum_{j=0}^p ia_jD_x^jv\|_{L^\infty_t H_x^{\tilde{m},2N}}^2
\\
&\leq& C \left( \|\tilde f(u)\|_{L^\infty_t H_x^{\tilde{m},2N}} ^2+ \|v\|_{L^\infty_t H_x^{\tilde{m}+p,2N}}^2 \right).
\eeqsn
For the first term in the right hand side 
we have, since $\tilde{m} + p-1 \leq m$:

\beqsn
\|\tilde f(t,u)\|_{H_x^{\tilde{m},2N }} &=&\|\langle \cdot \rangle^{2N}D_x^ru(t,\cdot)\int_0^t\partial_s(u^n(s,\cdot)\bar u^q(s,\cdot))ds\|_{H_x^{\tilde{m}}} \leq 
\\&\leq & \|D_x^ru\|_{L^\infty_t H^{\tilde{m}}}\int_0^t \|  \langle \cdot \rangle^{2N}\partial_s(u^n(s,\cdot)\bar u^q(s,\cdot)) \|_{H_x^{\tilde{m}}}ds
\\&\leq &\|u\|_{L^\infty_t H_x^{\tilde m+r}}\int_0^t \|  n\langle \cdot \rangle^{2N}\partial_su(s,\cdot)u^{n-1}(s,\cdot)\bar u^q(s,\cdot)) \|_{H_x^{\tilde{m}}}ds
\\
&+& \|u\|_{L^\infty_t H_x^{\tilde{m}+r}} \int_0^t \|  q\langle \cdot \rangle^{2N}\partial_s \bar u (s,\cdot)u^n(s,\cdot)\bar u^{q-1}(s,\cdot) \|_{H_x^{\tilde{m}}}ds
\\
&\lesssim& T \|u\|_{L^\infty_t H_x^{\tilde{m}+p-1}} \sup_{t} \|u\|_{H_x^{\tilde m}}^{n+q-1} \sup_{t} \|\partial_t u\|_{H_x^{\tilde m,2N}}
\\
&\lesssim  &T \|u\|_{X_T}^{n+q+1}.
\eeqsn
As for the second one, we notice that for every $t \in [0,T]$ we have
\begin{multline*}
	\| v (t)\|_{ H_x^{\tilde{m}+p,2N}}^2 = \left\| W(t,0)g+ \int_0^t W(t,\tau) \tilde f(\tau, u) \, d\tau \right\|^2_{H_x^{\tilde{m}+p,2N}}\\
	\leq 2 \left[  \left\| W(t,0)g \right\|^2_{H_x^{\tilde{m}+p,2N}} + \int_0^t \left\| W(t,\tau) \tilde f(\tau, u)\right\|^2_{H_x^{\tilde{m}+p,2N}}\, d\tau \right].
\end{multline*}

Applying Lemma \ref{KenigPonceRV} with $s=\tilde{m}+p$ and $n=2N$, then Lemma 6.0.1 in \cite{FS} to the last line of the inequality above, we obtain 
\begin{multline*}
	\| v (t)\|_{ H_x^{\tilde{m}+p,2N}}^2  \leq 2   C_{m,T}(1+T^{2N}) \left[\| g\|_{H^{\tilde{m}+p+2N(p-1), 2N}}^2 +T^2\| \tilde f(t,u)\|^2_{L^\infty_t H_x^{\tilde{m}+p+2N(p-1),2N}}\right] \\
	\leq  2   C_{m,T}(1+T^{2N})
	\left[ \|g\|_{H^{\tilde{m}+p+2N(p-1), 2N}}^2 + T \| u^n\bar u^q-g^n\bar g^q\|_{L^\infty_t H_x^{\frac12+\varepsilon, 2N}}^2 \cdot \|D_x^ru\|^2_{L^\infty_t H_x^{\tilde{m}+p+2N(p-1)}} \right.
	\\
	\left.\hskip+6.6cm+ T\| D_x^ru\|_{L^\infty_t H_x^{\frac12+\varepsilon, 2N}}^2 \cdot \|u^n\bar u^q-g^n\bar g^q\|^2_{L^\infty_t H_x^{\tilde{m}+p+2N(p-1)}}  \right].
	\end{multline*} 
 Finally, we observe that $\tilde m> \frac{m}{2} + \frac{3}{4}(p-1) + \frac{1}{2}> (p-1) + \frac{1}{2} > \frac{1}{2} + \varepsilon$, so both $ \| u\|_{H_x^{\frac12+\varepsilon, 2N}}$ and $ \| D_x^r u\|_{H_x^{\frac12+\varepsilon, 2N}}$ can be bounded from above by $ \| u\|_{H_x^{\tilde m, 2N}}$. Moreover,  $\tilde m+p+2N(p-1)+(p-1) \leq m$ by assumption $\tilde m\leq m -(2N+1)(p-1)-p$, so both $ \|u\|^2_{H_x^{\tilde{m}+p+2N(p-1)}} $ and $\| D_x^ru\|^2_{H_x^{\tilde{m}+p+2N(p-1)}} $ with $r\leq p-1$ can be bounded from above by $ \|u\|^2_{H_x^{m}}.$ Hence, using (8) of Proposition \ref{wSobolevproperties}, we conclude that
$$ \| v (t)\|_{ H_x^{\tilde{m}+p,2N}}^2\lesssim (1+T^{2N}) \left[ \|g\|_{H^{m, 2N}}^2 + 2T\|u\|^{2}_{X_T}\left(\|u\|^{2(n+q)}_{X_T}+\|g\|_{H^{m, 2N}}^{2(n+q)}\right) \right]<\infty.
$$
In conclusion, we obtain that there exists a constant $C_{m}>0$ depending on $T$ and bounded for $T \to 0^+$ such that

\beqsn 
\|v \|_{X_T}^2 &\leq &C_{m} \left\{\|g \|_{H^{m}}^{2}+T\|u\|^{2n}_{X_T}\|g\|^{2(n+q)}_{H^m}+ T\|u\|^{4n+2q}_{X_T}
+T \|u\|_{X_T}^{2(n+q+1)}\right.
\\
&+&\left.(1+T^{2N}) \left[ \|g\|_{H^{m,2N}}^2 + T\|u\|^{2}_{X_T}\left(\|u\|^{2(n+q)}_{X_T}+\|g\|_{H^{m, 2N}}^{2(n+q)}\right) \right]\right\}<\infty
\eeqsn
which imply that $v\in X_T$. Now, choosing $R^2=2C_m \left\{\|g \|_{H^{m}}^{2} +\|g\|_{H^{m, 2N}}^2\right)$ we have

\beqsn
\|v \|_{X_T} ^2&\leq& \frac {R^2}2+ C_m T\left[\|u\|^{2n}_{X_T}\|g\|^{2(n+q)}_{H^m}+ \|u\|^{4n+2q}_{X_T}
+ \|u\|_{X_T}^{2(n+q+1)}+T^{2N}\|g\|^2_{H^{m, 2N}}\right. \\
&&\left.+(1+T^{2N})\|u\|^{2}_{X_T}\left(\|u\|^{2n}_{X_T}+\|g\|_{H^{m, 2N}}^{2n}\right) 
\right]
\\
&\leq &
\frac {R^2}2+ C'_m T\left[\|u\|^{2n}_{X_T}R^{2(n+q)}+ \|u\|^{4n+2q}_{X_T}
+ \|u\|_{X_T}^{2(n+q+1)}+T^{2N}R^2\right. \\
&&\left.+(1+T^{2N})\|u\|^{2}_{X_T}\left(\|u\|^{2n}_{X_T}+R^{2n}\right) 
\right] \\
&\leq &
\frac {R^2}2+ C'_m T\left[R^{4n+2q}
+ R^{2(n+q+1)}+T^{2N}R^2+(1+T^{2N})R^{2n+2} 
\right],
\eeqsn
for a new constant $C'_m>0$ depending on $T$ and bounded for $T \to 0^+$. 
Hence, we can choose $T$ is sufficiently small such that $\|v \|_{X_T} ^2\leq R^2$, that is we obtain that $\Phi$ maps the closed ball of radius $R$ in $X_T$ into itself.

Let us now show that $\Phi$ is a contraction on $B_R =\{u \in X_T: \|u\|_{X_T}\leq R\}.$ Fix $ u_1, u_2 \in B_R$ and let $v_1=\Phi(u_1), v_2 =\Phi(u_2).$ The function $v_1- v_2$ solves
\begin{equation}\label{contractionCP}
	\begin{cases} \tilde P(v_1- v_2)=\tilde f(t,u_1)- \tilde f(t, u_2) \\ (u_1- u_2)(0,x)=0 \end{cases} \qquad (t,x) \in [0,T] \times \R.
\end{equation}
By Theorem \ref{smoothing_theorem} and the definition of $\|-\|_{X_T}$we have
\begin{equation}\label{differenceenergyestimate}
	\|v_1- v_2 \|^2_{X_T} \leq C_{m,T} \int_0^T \| \tilde f(\tau,u_1)- \tilde f(\tau,u_2) \|^2_{H_x^{m-\frac{p-1}{2}, \frac{\sigma}{2}}}  d\tau + \|v_1- v_2 \|_{L^\infty_t H_x^{\tilde m,2N}}^2+ \|\partial_t(v_1- v_2) \|_{L^\infty_t H_x^{\tilde m, 2N}}^2.
\end{equation}
Let us estimate the three terms in the right-hand side of \eqref{differenceenergyestimate}.
We first notice that we may write
\begin{eqnarray}\nonumber
	\tilde f(u_1)- \tilde f(u_2) &=& (u_1^n \bar{u}_1^q - g^n \bar g^q) D_x^r u_1 - (u_2^n \bar{u}_2^q - g^n \bar g^q)D_x^r u_2 \\ \nonumber
	&=& (u_1^n-u_2^n) \bar u_1^q D_x^r u_1+ (\bar u_1^q - \bar u_2^q) u_2^n D_x^r u_1+ (u_2^n\bar u_1^q - g^n \bar g^q ) D_x^r (u_1-u_2) \\ \nonumber
	&=& (u_1-u_2)P_{n-1}(u_1,u_2) \bar u_1^q D_x^r u_1+ (\bar u_1 - \bar u_2)P_{q-1}(u_1,u_2) u_2^n D_x^r u_1 \\ \label{splitt} &&+ (u_2^n\bar u_1^q - g^n \bar g^q ) D_x^r (u_1-u_2),
\end{eqnarray}
for some suitable polynomials $P_{n-1}, P_{q-1}$ of degree $n-1$ and $q-1$ respectively.
Notice that we can write
\begin{equation}\label{versioneconintegrale3}
	u_2^n\bar u_1^q - g^n \bar g^q = \int_0^t \partial_s (u_2^n\bar u_1^q(s, \cdot ))\, ds.
\end{equation}


Now we have (recalling $\Delta_{m,p} = m-\frac{p-1}{2}$)

\begin{eqnarray*}
\int_0^T\hskip-0.6cm& \|&\hskip-0.4cm (u_1- u_2)P_{n-1}(u_1, u_2)\bar u_1^q D_x^ru_1\|^2_{H_x^{m-\frac{p-1}{2}, \frac{\sigma}{2}}} d\tau
\\
&=&\sum_{k=0}^{\Delta_{m,p}}\int_0^T \|\langle x \rangle^{\frac{\sigma}{2}}D_x^k\left( (u_1- u_2)P_{n-1}(u_1, u_2)\bar u_1^q D_x^ru_1\right)\|^2_{L^2_x} d\tau
\\
&=&\sum_{k=0}^{\Delta_{m,p}} \sum_{\stackrel{k_1+k_2+k_3+k_4=k}{k_4 \geq \frac{\Delta_m,p}{2} }}\hskip-0.8cmC_{k_1,k_2,k_3,k_4} \\ &&\cdot \int_0^T\!\!\!\int_\R |\langle x\rangle^{\sigma}D_x^{k_1} (u_1- u_2)|^2|D_x^{k_2}P_{n-1}(u_1, u_2)|^2|D_x^{k_3}\bar u_1^q|^2 |\langle x\rangle^{-\sigma/2}D_x^{k_4+r}u_1|^2 dx d\tau
\\
&+& \sum_{k=0}^{\Delta_{m,p}}\sum_{\stackrel{\tilde k+k_4=k}{k_4 < \frac{\Delta_{m,p}}{2}}}\hskip-0.3cmC_{\tilde k,k_4} \int_0^T\!\!\!\int_\R |\langle x\rangle^{-\sigma/2} D_x^{\tilde k} \left((u_1- u_2)P_{n-1}(u_1, u_2)\bar u_1^q\right)|^2 \cdot |\langle x\rangle^{\sigma}D_x^{k_4+r}u_1|^2 dx d\tau
\\&=& : K_1+K_2.
\end{eqnarray*}
Now, in $K_1$ we have
\begin{eqnarray*}\sup_{(t,x) \in [0,T]\times \R}|D_x^{k_2}P_{n-1}(u_1, u_2)|^2|D_x^{k_3}\bar u_1^q|^2 &\lesssim&  \|u_1 \|_{L^\infty_t H_x^m}^{2q}(\|u_1 \|^2_{L^\infty_t H_x^m}+\|u_2 \|^2_{L^\infty_t H_x^m})^{n-1} \\ &\lesssim&  (\|u_1\|_{X_T}+\|u_2\|_{X_T})^{2(n+q-1)},
\end{eqnarray*}
so this supremum can be moved outside the integral. Moreover, if $k_4 \geq \Delta_{m,p}/2,$ then on one hand $k_1+\frac{1}{2}<\frac{\Delta_{m,p}}{2} + \frac{1}{2} < \tilde m,$ and on the other hand $k_4+r\leq  \Delta_{m,p}+p-1 = m + \frac{p-1}{2}$; then, writing 
\begin{multline*}
\int_0^T\!\!\!\int_\R |\langle x\rangle^{\sigma}D_x^{k_1} (u_1- u_2)|^2 |\langle x\rangle^{-\sigma/2}D_x^{k_4+r}u_1|^2 dx d\tau \\ =  \int_0^T\!\!\!\int_\R \left|\int_0^\tau \langle x\rangle^{\sigma}D_x^{k_1}\partial_s (u_1- u_2)\, ds \right|^2  |\langle x\rangle^{-\sigma/2}D_x^{k_4+r}u_1|^2 dx d\tau 
\end{multline*}
and using the fact that
\begin{eqnarray*} \left|\int_0^\tau  \langle x\rangle^{\sigma}D_x^{k_1}\partial_s (u_1- u_2)\, ds \right| &\leq& T \sup_{(t,x)\in [0,T]\times \R} | \langle x\rangle^{\sigma}D_x^{k_1}\partial_t (u_1- u_2) | \\ &\leq& T \|\partial_t(u_1-u_2)\|_{L^\infty_t H_x^{\tilde{m}, \sigma}} \leq T \| u_1-u_2\|_{X_T},
\end{eqnarray*}
and $$\int_0^T \int_\R |\langle x\rangle^{-\sigma/2}D_x^{k_4+r}u_1|^2 dxd\tau \leq \int_0^T \|u_1 \|^2_{H_x^{m+\frac{p-1}2,-\frac\sigma 2}}\, dt \leq \|u_1\|_{X_T}^2, $$ we conclude that 
$$
K_1 \leq C'_m T^2 \|u_1\|_{X_T}^2 (\|u_1\|_{X_T}+\|u_2\|_{X_T})^{2(n+q-1)} \| u_1-u_2\|^2_{X_T}.
$$
To estimate $K_2$ we observe that
if $k_4 <\Delta_{m,p}/2,$ then $k_4+r + \frac{1}{2} < \frac{\Delta_{m,p}}{2} + p-1 + \frac{1}{2} < \tilde m$; therefore, using the algebra properties of $H^m(\R)$, we get 

\beqsn
K_2&\lesssim & \|u_1\|^2_{L^\infty_t H_x^{\tilde{m}, \sigma}} \int_0^T \|(u_1- u_2)P_{n-1}(u_1, u_2)\bar u_1^q\|_{H_x^{m-\frac{p-1}{2}, -\frac{\sigma}{2} }}^2 d\tau
\\
&\lesssim & \|u_1\|^2_{L^\infty_t H_x^{\tilde{m}, \sigma}} \int_0^T \|(u_1- u_2)P_{n-1}(u_1, u_2)\bar u_1^q\|_{H_x^{m }}^2 d\tau
\\
&\lesssim & T \|u_1\|^2_{L^\infty_t H_x^{\tilde{m}, \sigma}}\|u_1-u_2\|^2_{L^\infty_t H_x^{m}}(\|u_1\|_{L^\infty_t H_x^{m}}+\|u_2\|_{L^\infty_t H_x^{m}})^{2(n-1)}\|u_1\|^{2q}_{L^\infty_t H_x^{m}}
\\
&\lesssim & T\|u_1\|_{X_T}^2 (\|u_1\|_{X_T}+\|u_2\|_{X_T})^{2(n+q-1)} \| u_1-u_2\|^2_{X_T}.
\eeqsn

Hence, we obtain, for a positive constant $C_m$, 
\beqsn \int_0^T \hskip-0.3cm\left\|\left((u_1-u_2)P_{n-1}(u_1,u_2) \bar u_1^q D_x^r u_1\right)\right.&&\hskip-0.6cm\left.(\tau) \right\| ^2_{H_x^{m-\frac{p-1}{2}, \frac{\sigma}{2} }}d\tau
\\
&\leq& C_m T  \|u_1\|_{X_T}^2 (\|u_1\|_{X_T}+\|u_2\|_{X_T})^{2(n+q-1)} \| u_1-u_2\|^2_{X_T}.\eeqsn
Repeating readily the same argument we obtain also
\beqsn \int_0^T \hskip-0.3cm \left\|\left((\bar u_1-\bar u_2)P_{q-1}(u_1,u_2)  u_2^n D_x^r u_1\right)\right.&&\hskip-0.6cm \left.(\tau) \right\| ^2_{H_x^{m - \frac{p-1}{2}, \frac{\sigma}{2}}}d\tau
\\
& \leq& C_m T  \|u_1\|_{X_T}^2 (\|u_1\|_{X_T}+\|u_2\|_{X_T})^{2(n+q-1)} \| u_1-u_2\|^2_{X_T}.\eeqsn
Concerning the third term in the right hand side of \eqref{splitt}, using \eqref{versioneconintegrale3}, we have
\beqsn\int_0^T&& \left\| (u_2^n\bar u_1^q - g^n \bar g^q ) D_x^r (u_1-u_2)\right\|_{H_x^{m-\frac{p-1}{2}, \frac{\sigma}{2} }}^2 \, d\tau
\\& \leq& \sum_{k=0}^{\Delta_{m,p}} \sum_{\stackrel{k_1+k_2 =k}{k_2 \geq \frac{\Delta_{m,p}}{2}}} \int_0^T \int_\R \left|\int_0^\tau \langle x \rangle^{\sigma}D_x^{k_1}\partial_s(u_2^n\bar u_1^q)(s, x)\, ds \right|^2 \cdot |\langle x \rangle^{-\sigma/2}D_x^{k_2+r}(u_1-u_2)|^2 \, dx d\tau \\
&+& \sum_{k=0}^{\Delta_{m,p}} \sum_{
\stackrel{k_1+k_2 =k}{k_2 < \frac{\Delta_{m,p}}{2}}} \int_0^T \int_\R \left| \langle x \rangle^{-\sigma/2} D_x^{k_1}(u_2^n\bar u_1^q(s, x)-g^n(x)\bar g^q(x)) \right|^2 \cdot |\langle x \rangle^{\sigma}D_x^{k_2+r}(u_1-u_2)|^2 \, dx d\tau
\\
&=:& \widetilde{K}_1+\widetilde{K}_2.
\eeqsn
Since for $k_2 \geq \frac{\Delta_{m,p}}{2}$ we have $k_1 + \frac{1}{2} \leq \frac{\Delta_{m,p}}{2} + \frac{1}{2} < \tilde{m}$, then we get (recalling that we are assuming $\sigma<2$)
\begin{eqnarray*}\left|\int_0^\tau \langle \cdot \rangle^{\sigma}D_x^{k_1}\partial_s(u_2^n\bar u_1^q)(s, \cdot)\, ds \right| &\leq& T \sup_{(t,x) \in [0,T]\times \R} |\langle x \rangle^{\sigma} D_x^{k_1}\partial_t (u_2^n\bar u_1^q)| \\ &\lesssim& T \|\partial_t (u_2^n\bar u_1^q)\|_{L^\infty_t H_x^{\tilde m, \sigma}}
\\
&\lesssim &  T\|\partial_t u_2 \|_{L^\infty_t H_x^{\tilde m, \sigma}} \| u_2 \|_{L^\infty_t H_x^{\tilde m, \sigma}}^{n-1}\|u_1\|_{L^\infty_t H_x^{\tilde m, \sigma}}^q
\\
&+& T\|\partial_t u_1 \|_{L^\infty_t H_x^{\tilde m, \sigma}}\| u_2 \|_{L^\infty_t H_x^{\tilde m, \sigma}}^{n} \| u_1 \|_{L^\infty_t H_x^{\tilde m, \sigma}}^{q-1} \\ &\lesssim& T( \|u_2\|_{X_T}^{n}+ \|u_1\|_{X_T}^{q}).
\end{eqnarray*}

Then, we conclude that
\beqsn
\widetilde{K}_1 &\lesssim&  T^2( \|u_2\|_{X_T}+ \|u_1\|_{X_T})^{2(n+q)} \int_0^T \| \langle x\rangle^{-\sigma/2}(u_1-u_2)(t)\|^2_{H_x^{m+\frac{p-1}{2}}}\, dt 
\\
&\lesssim&  T^2 ( \|u_2\|_{X_T}+ \|u_1\|_{X_T})^{2(n+q)} \cdot \| u_1-u_2 \|_{X_T}^2.
\eeqsn
Concerning $\widetilde{K}_2$, we have $k_2+r+\frac{1}{2}< \frac{\Delta_{m,p}}{2} + p-1 + \frac{1}{2} < \tilde m$, so
\beqsn
\tilde K_2&\lesssim&\|u_1-u_2\|_{L^\infty_t H_x^{\tilde m, \sigma}}^2\int_0^T  \|  u_2^n\bar u_1^q(s)-g^n\bar g^q\|_{H^m_x}^2 d\tau
\\
&\lesssim& T\| u_1-u_2 \|_{X_T}^2( \|u_2\|_{X_T}+ \|u_1\|_{X_T}+  \|g\|_{X_T})^{2(n+q)}.
\eeqsn

Summing up, we obtain the following estimate:
\begin{equation}\label{stimaintegraledellanorma} \int_0^T \| \tilde f(\tau,u_1)- \tilde f(\tau,u_2) \|^2_{H^{m-\frac{p-1}{2}, \frac{\sigma}{2} }}  d\tau \lesssim T (\|u_1\|_{X_T}+\|u_2\|_{X_T}+\|g\|_{X_T} )^{2n+2q} \|u_1-u_2\|^2_{X_T}.
\end{equation}

For the estimate of the last two terms in \eqref{differenceenergyestimate}, arguing as before we write
\beqsn
\|v_1- v_2\|_{L^\infty_t H_x^{\tilde{m}, 2N}}^2+\|\partial_t(v_1- v_2)\|_{L^\infty_t H_x^{\tilde{m},2N}}^2
&\leq& \|v_1- v_2\|_{L^\infty_t H_x^{\tilde{m},2N}}^2+\|\tilde f(u_1)-\tilde f(u_2)\|_{L^\infty_t H_x^{\tilde{m},2N}}^2 
\\
&+&\| \ds\sum_{j=0}^p ia_jD_x^j(v_1- v_2)\|_{L^\infty_t H_x^{\tilde{m},2N}}^2
\\
&\lesssim& \left( \|\tilde f(u_1)-\tilde f( u_2)\|_{L^\infty_t H_x^{\tilde{m},2N}} ^2+ \|v_1- v_2\|_{L^\infty_t H_x^{\tilde{m}+p,2N}}^2 \right).
\eeqsn 
To estimate $ \|\tilde f(u_1)-\tilde f( u_2)\|_{L^\infty_t H_x^{\tilde{m},2N}} ^2$ we can use again \eqref{splitt}. Namely, writing
$$
(u_1-u_2)P_{n-1}(u_1,u_2)\bar u_1^q D_x^r u_1 = \int_0^t \partial_s (u_1-u_2)(s,x)\, ds \cdot P_{n-1}(u_1,u_2)\bar u_1^q D_x^r u_1,
$$
we have, by Minkowski inequality and the algebra properties of $H^{\tilde{m}}(\R)$: 
\begin{eqnarray*}
\|(u_1\hskip-0.3cm&-&\hskip-0.3cmu_2)P_{n-1}(u_1,u_2)\bar u_1^q D_x^r u_1\|_{H_x^{\tilde m, 2N}} =
\|\langle \cdot \rangle^{2N}(u_1-u_2)P_{n-1}(u_1,u_2)\bar u_1^q D_x^r u_1\|_{H_x^{\tilde m}}\\
&\leq& \|\langle \cdot \rangle^{2N}(u_1-u_2)\|_{H_x^{\tilde m}} \cdot \|P_{n-1}(u_1,u_2) \|_{H_x^{\tilde m}}\cdot \|\bar u_1^q \|_{H_x^{\tilde m}}\cdot \| D_x^r u_1\|_{H_x^{\tilde m}} \\
&\leq&  \int_0^t \| \langle \cdot \rangle^{2N}\partial_t(u_1-u_2)\|_{H_x^{\tilde m}} \, d\tau  \cdot \|P_{n-1}(u_1,u_2)\|_{L^\infty_t H_x^{\tilde{m}}}\cdot\| u_1\|_{L^\infty_t H_x^{\tilde{m}}}^q \cdot \|u_1\|_{L^\infty_t H_x^{\tilde{m}+r}} \\
&\leq& T \|\partial_t(u_1-u_2)\|_{L^\infty_t H_x^{\tilde{m}, 2N}} \cdot \|P_{n-1}(u_1,u_2)\|_{L^\infty_t H_x^{\tilde{m}}}\cdot\| u_1\|_{L^\infty_t H_x^{\tilde{m}}}^q \cdot \|u_1\|_{L^\infty_t H_x^{\tilde{m}+r}}
\\ 
&\leq&	T \|u_1-u_2\|_{X_T} (\|u_1\|_{X_T}+\|u_2\|_{X_T})^{n-1} \|u_1\|_{X_T}^q \| u_1\|_{X_T}, 
\end{eqnarray*}
where we have used the fact that $\tilde m+r  \leq \tilde m + p-1 < m.$
Similarly, we obtain 
$$
\| (\bar u_1- \bar u_2)P_{q-1}(u_1,u_2)u_2^n D_x^r u_1\|_{L^\infty_t H_x^{\tilde m, 2N }} \leq T \|u_1-u_2 \|_{X_T} \|u_2\|_{X_T}^n (\|u_1\|_{X_T} + \|u_2\|_{X_T})^{q-1} \| u_1\|_{X_T}.
$$ 
Moreover, using \eqref{versioneconintegrale3} and Minkowski inequality again, we get
\begin{eqnarray*}
\| (u_2^n\bar u_1^q \hskip-0.2cm&-& \hskip-0.2cmg^n \bar g^q ) D_x^r (u_1-u_2)\|_{H_x^{\tilde m, 2N}} = \left\|\int_0^t \partial_s (u_2^n \bar u_1^q) D_x^r (u_1-u_2)ds\right\|_{H_x^{\tilde m, 2N}} \\ &\lesssim& \int_0^T \|\partial_s (u_2^n \bar u_1^q)  D_x^r (u_1-u_2)\|_{H_x^{\tilde m, 2N}} ds\\ &\lesssim& T\left( \| \partial_t u_2 \|_{L^\infty_t H_x^{\tilde m, 2N}} \| u_2\|^{n-1}_{L^\infty_t H_x^{\tilde m}}\| u_1\|^{q}_{L^\infty_t H_x^{\tilde m}}+\| \partial_t u_1 \|_{L^\infty_t H_x^{\tilde m, 2N}} \| u_1\|^{q-1}_{L^\infty_t H_x^{\tilde m}}\| u_2\|^{n}_{L^\infty_t H_x^{\tilde m}} \right)
\\
&&\cdot \| u_1-u_2 \|_{L^\infty_t H_x^{m}} \\ &\lesssim&  T( \|u_2\|_{X_T}+ \|u_1\|_{X_T})^{n+q} \| u_1-u_2 \|_{X_T}. 
\end{eqnarray*}
We finally obtain 
\begin{equation}\label{secondastima}
\| \tilde{f}(u_1)-\tilde f (u_2)\|_{L^\infty_t H_x^{\tilde m, 2N}} \lesssim T (\| u_1\|_{X_T}+ \| u_2\|_{X_T})^{n+q} \| u_1-u_2 \|_{X_T}.
\end{equation}

Moreover, taking into account that $v_1- v_2$ solves \eqref{contractionCP}, we have

\begin{eqnarray}\label{square}
\| v_1- v_2 \|^2_{H_x^{\tilde m+p,2N}} &=& \left\| \langle \cdot \rangle^{2N} \int_0^t W(t,\tau)(\tilde{f}(\tau, u_1)-\tilde{f}(\tau, u_2)\,d\tau\right\|^2_{H_x^{\tilde m +p}} \, \\ \nonumber
&\leq &  \int_0^t \left\| \langle \cdot \rangle^{2N} W(t,\tau)(u_1-u_2)P_{n-1}(u_1,u_2) \bar u_1^q D_x^r u_1\right\|_{H_x^{\tilde m +p}}^2\, d\tau 
\\ \nonumber
&&+\int_0^t \left\| \langle \cdot \rangle^{2N}  W(t,\tau)u_2^n(\bar u_1-\bar u_2)P_{q-1}(\bar u_1, \bar u_2)D_x^r u_1\right\|_{H_x^{\tilde m +p}}^2\, d\tau \\ \nonumber
&&+  \int_0^t \left\| \langle \cdot \rangle^{2N} W(t,\tau)u_2^n \bar{u}_1^q D_x^r (u_1-u_2)\right\|_{H_x^{\tilde m +p}}^2\, d\tau.
\end{eqnarray}
Applying Lemma \ref{KenigPonceRV}, precisely \eqref{stimaKPRV2}, to each term in the right-hand side we obtain

\begin{eqnarray} \label{brutta} 	
\| v_1- v_2 \|^2_{H_x^{\tilde m+p, 2N}} &\lesssim&  T(1+T^{2N}) \left\| (u_1-u_2)P_{n-1}(u_1,u_2) \bar u_1^q D_x^r u_1\right\|_{H_x^{\tilde m +p+2N(p-1), 2N}}^2 \\ && \nonumber +   T(1+T^{2N}) \left\|u_2^n(\bar u_1-\bar u_2)P_{q-1}(\bar u_1, \bar u_2)D_x^r u_1 \right\|_{H_x^{\tilde m +p+2N(p-1), 2N}}^2 \\ &&+ 
\nonumber	 T(1+T^{2N}) \left\| u_2^n \bar{u}_1^q D_x^r (u_1-u_2)\right\|_{H_x^{\tilde m +p+2N(p-1), 2N}}^2.
\end{eqnarray}
Now, we can apply Lemma 6.0.1 in \cite{FS} to each term in the right-hand side of \eqref{brutta}. We get
\begin{eqnarray*}
\left\| (u_1-u_2)P_{n-1}(u_1,u_2)\right.&&\hskip-1cm\left. \bar u_1^q D_x^r u_1\right\|_{H_x^{\tilde m +p+2N(p-1), 2N}}^2 \\
&\lesssim&  \| u_1-u_2 \|^2_{H_x^{1/2+\varepsilon, 2N}} \left\|P_{n-1}(u_1,u_2) \bar u_1^q D_x^r u_1\right\|^2_{H_x^{\tilde m +p+2N(p-1)}} \\&&+ \|u_1-u_2 \|^2_{H_x^{\tilde m +p+2N(p-1)}} \left\|P_{n-1}(u_1,u_2) \bar u_1^q D_x^r u_1\right\|^2_{H_x^{1/2+\varepsilon,2N}}.\end{eqnarray*}
Using the algebra properties of the weighted Sobolev spaces, 
$\tilde{m} +p +(2N+1)(p-1) \leq m $ and $\frac{1}{2} + r \leq \frac{1}{2} + p-1 < \tilde{m}$ ($m > \frac{p-1}{2}$), we get 
\begin{eqnarray*}
&&\left\| (u_1-u_2)P_{n-1}(u_1,u_2) \bar u_1^q D_x^r u_1\right\|_{H_x^{\tilde m +p+2N(p-1), 2N}}^2 
\\
&&\lesssim \|u_1-u_2 \|^2_{H_x^{\tilde{m},2N}} \cdot \|u_1\|^{2q}_{H_x^{\tilde{m}+p+2N(p-1)}}\cdot \|u_1 \|^{2}_{H_x^{\tilde m + p+2N(p-1)+ p-1}} \cdot \|P_{n-1}(u_1, u_2)\|^2_{H_x^{\tilde m+p+2N(p-1) }} \\ &&+ \|u_1-u_2 \|^2_{H_x^m} \cdot \|P_{n-1}(u_1, u_2)\|^2_{H_x^{\tilde m,2N}}\|u_1\|_{H_x^{\tilde m,2N}}^{2q} \| u_1\|_{H_x^{\tilde m,2N}}^2\\
&&\lesssim \|u_1\|^{2q+2}_{X_T}(\|u_1\|^{2n-2}_{X_T} + \|u_2\|^{2n-2}_{X_T})  \|u_1-u_2\|_{X_T}^2. 
\end{eqnarray*}
Similarly, for the second term in \eqref{brutta} we obtain 
$$ \left\|u_2^n(\bar u_1-\bar u_2)P_{q-1}(\bar u_1, \bar u_2)D_x^r u_1 \right\|_{H_x^{\tilde m +p+2N(p-1), 2N}}^2 \lesssim \|u_2\|^{2n+2}_{X_T}(\|u_1\|^{2q-2}_{X_T} + \|u_2\|^{2q-2}_{X_T})  \|u_1-u_2\|_{X_T}^2. $$
and for the last one we get
\begin{eqnarray*}
\left\| u_2^n \bar{u}_1^q D_x^r (u_1-u_2)\right\|_{H_x^{\tilde m +p+2N(p-1),2N}}^2 &\lesssim& (\|u_1\|+\|u_2\|)^{2n+2q}_{X_T}\|u_1-u_2\|_{X_T}^2.
\end{eqnarray*}

Coming back to \eqref{square}, we conclude that
\begin{eqnarray} \label{terzastima} \nonumber\| v_1- v_2 \|^2_{L^\infty_t H_x^{\tilde m+p, 2N}} &\lesssim& T (1+T^{2N})\|u_1-u_2\|^2_{X_T}  (\|u_1\|_{X_T}+ \|u_2\|_{X_T})^{2(n+q)}.
\end{eqnarray} 
Gathering the previous estimates \eqref{stimaintegraledellanorma}, \eqref{secondastima} and \eqref{terzastima}, we obtain the following estimate
$$\|v_1-v_2\|^2_{X_T} \lesssim T (1+T^{2N}) (\|u_1\|_{X_T}+ \|u_2\|_{X_T}+ \|g\|_{X_T})^{2n+2q}\|u_1-u_2\|_{X_T}^2.$$
Then, for every fixed $R>0$ we can choose $T$ sufficiently small such that $\Phi$ is a contraction on the closed ball $B_R \subset X_T.$ Then the existence and uniqueness of a solution of the Cauchy problem \eqref{nonlinearCP} in $X_{T^\ast}$ for some $T^\ast \in (0,T]$ follows by standard application of the contraction principle. This concludes the proof of Theorem \ref{mainNL}.
\qed

\medskip

\noindent\textbf{Acknowledgements.} 
The first author was supported by the grant 2022/01712-3 from S\~ao Paulo Research Foundation (FAPESP) and  by the grant 306699/2025-7 from Conselho Nacional de Desenvolvimento Cient\'ificoo e Tecnol\'ogico (CNPq). The second and the third author have been partially supported by the Italian Ministry of the University and Research - MUR, within the PRIN 2022 Call (Project Code
2022HCLAZ8, CUP D53C24003370006).


\vskip1cm
\noindent
$^1$ ALEXANDRE ARIAS JUNIOR, Department of Computing and Mathematics, Universidade de S\~ao Paulo, Ribeir\~ao Preto, Brazil \\
email: alexandre.ariasjunior@usp.br \\

\noindent
$^2$ ALESSIA ASCANELLI, Dipartimento di Matematica e Informatica, University of Ferrara, Via Machiavelli 30, 
44121 Ferrara, Italy \\
email: alessia.ascanelli@unife.it
\\

\noindent
$^3$ MARCO CAPPIELLO, Dipartimento di Matematica ``G. Peano'', University of Turin, Via Carlo Alberto 10, 10123 Torino,
Italy
\\
email: marco.cappiello@unito.it

\end{document}